\newcommand{\IB}{{\mathbb B}}
\newcommand{\IK}{{\mathbb K}}
\newcommand{\IR}{{\mathbb R}}
\newcommand{\cD}{{\mathcal D}}
\newcommand{\cH}{{\mathcal H}}
\newcommand{\cI}{{\mathcal I}}
\newcommand{\cK}{{\mathcal K}}
\newcommand{\fn}{{\mathfrak n}}
\newcommand{\id}{\mathrm{id}}
\newcommand{\opp}{\mathrm{op}}
\newcommand{\uni}{\mathrm{u}}
\newcommand{\ip}[1]{\mathopen{\langle}#1\mathclose{\rangle}}
\newcommand{\circledT}{\mathbin{\hspace{.1em}\raisebox{.2ex}[1.2ex][0pt]{${\scriptscriptstyle\top}\hspace{-.6em}{\scriptstyle\bigcirc}$}}}
\newtheorem{thm}{Theorem}
\newtheorem{lem}[thm]{Lemma}
\newtheorem{prop}[thm]{Proposition}
\newtheorem{cor}[thm]{Corollary}
\theoremstyle{definition}
\newtheorem{defn}[thm]{Definition}
\title{Haagerup approximation property via bimodules}
\author{Rui Okayasu}
\address{Department of Mathematics Education, Osaka Kyoiku University,
Osaka \mbox{582-8582},
Japan}
\email{rui@cc.osaka-kyoiku.ac.jp}
\thanks{R.O.\ is partially supported by JSPS KAKENHI Grant Number 25800065}
\author{Narutaka Ozawa}
\address{RIMS, Kyoto University, Kyoto \mbox{606-8502}, Japan}
\email{narutaka@kurims.kyoto-u.ac.jp}
\thanks{N.O.\ is partially supported by JSPS KAKENHI Grant Number 26400114}
\author{Reiji Tomatsu}
\address{Department of Mathematics, Hokkaido University,
Hokkaido \mbox{060-0810},
Japan}
\email{tomatsu@math.sci.hokudai.ac.jp}
\thanks{R.T.\ is partially supported by JSPS KAKENHI Grant Number 24740095}
\subjclass{46L10; 81R15}
\begin{document}
\begin{abstract}
The Haagerup approximation property (HAP) is defined for finite von Neumann 
algebras in such a way that the group von Neumann algebra of a discrete group 
has the HAP if and only if the group itself has the Haagerup property. The HAP 
has been studied extensively for finite von Neumann algebras and it is recently 
generalized for arbitrary von Neumann algebras by Caspers--Skalski and 
Okayasu--Tomatsu. One of the motivations behind the generalization is the fact 
that quantum group von Neumann algebras are often infinite even though the 
Haagerup property has been defined successfully 
for locally compact quantum groups by Daws--Fima--Skalski--White. 
In this paper, we fill this gap by proving that the von Neumann algebra of 
a locally compact quantum group with the Haagerup property has the HAP. 
This is new even for genuine locally compact groups. 
\end{abstract}
\maketitle

\section{Introduction}

The notion of the Haagerup property for locally compact groups 
is introduced after the celebrated work of U. Haagerup (\cite{haagerup}) 
on the reduced group $\mathrm{C}^*$-algebras of the free groups. 
This notion is a very useful generalization of amenability and has been 
extensively studied in various settings (see \cite{ccjjv}). Like the case of 
amenability, it is only natural to capture this property through operator algebras. 
Indeed, M. Choda (\cite{choda}) has defined 
a property now called the Haagerup approximation property 
(we will abbreviate it as HAP) for \emph{finite tracial} von Neumann algebras and 
proved that the group von Neumann algebra $LG$ of a discrete group $G$ 
has the HAP if and only if $G$ has the Haagerup property. 
The HAP (or its relative version) has been exploited extensively in the study 
of finite von Neumann algebras as means of deformations in Popa's 
deformation-vs-rigidity strategy (\cite{popaicm}). 

The HAP is recently generalized for general von Neumann algebras 
independently by Caspers--Skalski \cite{cs1,cs2} and 
by Okayasu--Tomatsu \cite{ot1,ot2}. Their definitions vary, but 
turn out to be equivalent (see \cite{cost}) and seem to lay a satisfactory 
foundation for the study of the HAP for general von Neumann algebras. 
One of the motivations behind the generalization is the fact that  
quantum group von Neumann algebras are often infinite even though 
the Haagerup property has been defined successfully for locally compact 
quantum groups by Daws--Fima--Skalski--White (\cite{dfsw}). 
In this paper, we fill this gap by proving that the von Neumann algebra of 
a locally compact quantum groups with the Haagerup property has the HAP, 
and the converse also holds true for \emph{strongly inner amenable} 
locally compact quantum groups.
This extends the same result obtained by Daws--Fima--Skalski--White (\cite{dfsw}) 
for the case of discrete quantum groups. 
Another motivation would be to incorporate Popa's deformation-vs-rigidity 
strategy to the study of general von Neumann algebras. 

To pursue the latter motivation and to deal with locally compact (quantum) 
group von Neumann algebras, 
we take Connes's view (\cite[V.B]{connes}) that theory of bimodules is 
to von Neumann algebras what theory of unitary representations is to groups.
So, we will give yet another characterization of the HAP in terms of bimodules, 
which is preceded by the work of Bannon--Fang (\cite{bannon-fang}) 
for finite von Neumann algebras. 
Thus, we introduce the \emph{strict mixing} property for bimodules and 
prove that a von Neumann algebra has the HAP if and only if 
it admits a strictly mixing bimodule which is amenable and
that the von Neumann algebra of a locally compact quantum group 
with the Haagerup property admits such a bimodule. 

\subsection*{Conventions}
By $(M,\varphi)$, etc., we will mean a pair of von Neumann algebra $M$ and 
a distinguished fns (faithful normal semifinite) weight $\varphi$ on it. 
The symbol $\odot$ means the algebraic tensor product, while $\otimes$ means 
the von Neumann algebraic, Hilbert space, or the spatial $\mathrm{C}^*$-algebraic 
tensor product. All $*$-repre\-sen\-tations are assumed to be non-degenerate. 

\section{Preliminary on bimodules} 

In this section, we will review the theory of bimodules over von Neumann algebras. 
See \cite[V.B]{connes}, \cite{popa}, or \cite[XI.3]{takesakiII} for a comprehensive treatment. 
In literature, bimodules are also called \emph{correspondences}. 

Let $M$ and $N$ be von Neumann algebras. 
A Hilbert space $\cH$ is said to be an \emph{$M$-$N$ bimodule} if 
it comes together with a $*$-repre\-sen\-tation
$\pi_{\cH}$ of $M\odot N^{\opp}$ that is normal in each variable. 
Here $N^\opp$ denotes the opposite von Neumann algebra of $N$. 
We refer $\pi_{\cH}|_M$ as the left $M$-action and $\pi_{\cH}|_{N^{\opp}}$ 
as the right $N$-action, and simply write $a\xi x = \pi_{\cH}(a\otimes x^\opp)\xi$ 
for $a\in M$, $x\in N$, and $\xi\in\cH$. 
The complex conjugate $\bar{\cH}$ of an $M$-$N$ bimodule is naturally 
an $N$-$M$ bimodule. 
The notation ${}_M\cH_{N}$ will indicate that $\cH$ is an $M$-$N$ bimodule.

An $M$-$N$ bimodule $\cH$ is \emph{weakly contained} 
in another $M$-$N$ bimodule $\cK$ (denoted by $\cH\preceq\cK$) if the 
identity map on $M\odot N^{\opp}$ extends to a continuous 
$*$-homo\-mor\-phism from $\mathrm{C}^*(\pi_{\cK}(M\odot N^{\opp}))$ 
to $\mathrm{C}^*(\pi_{\cH}(M\odot N^{\opp}))$, 
that is to say, if for any $\xi\in\cH$, 
any finite subsets $E\subset M$ and  $F\subset N$, and any $\varepsilon>0$, 
one can find $\eta_1,\ldots,\eta_n\in\cK$ such that 
$|\ip{ a\xi x, \xi} - \sum_i\ip{ a\eta_i x,\eta_i}| < \varepsilon$ 
for all $(a,x)\in E\times F$.

The identity bimodule over $M$ is the $M$-$M$ bimodule $L^2(M)$, given by  
$a\xi x = aJx^*J \xi$, where $L^2(M)$ is the standard form for $M$ and $J$ 
is the modular conjugation. 
When an fns weight $\varphi$ on $M$ is fixed, we identify 
$L^2(M)$ with $L^2(M,\varphi)$ and $J$ with $J_\varphi$. 
The vector in $L^2(M)$ that corresponds to 
$x\in \fn_\varphi:=\{ x \in M : \varphi(x^*x)<\infty\}$ is 
denoted by $x\varphi^{1/2}$. Also, $\varphi^{1/2}x := Jx^*\varphi^{1/2}$ 
for $x\in\fn_\varphi^*$. 

For $M$-$N$ bimodules (or just right $N$-modules) $\cH$ and $\cK$, 
the Banach space of bounded right $N$-module maps from $\cH$ into $\cK$ 
is denoted by $\IB(\cH_N,\cK_N)$. In case $\cH$ and $\cK$ coincide, 
we simply denote it by $\IB(\cH_N)$. 
Thus, $\IB(L^2(N)_N)$ coincides with $N$ acting on $L^2(N)$ from the left. 

Let us fix fns weights $\varphi$ on $M$ and $\psi$ on $N$. 
A vector $\xi\in {}_M\cH_N$ is said to be \emph{left $\psi$-bounded} if 
$L_\psi(\xi)\colon \psi^{1/2}x \mapsto \xi x$, $x\in\fn_\psi^*$, 
is bounded and hence defines an element in $\IB(L^2(N,\psi)_N,\cH_N)$. 
The subspace $\cD(\cH,\psi)$ of left $\psi$-bounded vectors is 
dense in $\cH$ (\cite[Lemma IX.3.3]{takesakiII}). 
We note that if $\xi\in\cD(\cH,\psi)$ and $a\in M$, then 
$a\xi\in\cD(\cH,\psi)$ and $L_\psi(a\xi) = aL_\psi(\xi)$.  
For $\xi_1,\xi_2\in\cD(\cH,\psi)$, we denote by 
$L_\psi(\xi_2^*\times\xi_1)$ the element in $N$ that corresponds to 
$L_\psi(\xi_2)^*L_\psi(\xi_1) \in \IB(L^2(N)_N)=N$.
It in fact belongs to $\fn_\psi^*\fn_\psi$ and satisfies 
$\psi(L_\psi(\xi_2^*\times\xi_1)) = \ip{\xi_1,\xi_2}$ 
(see \cite[p.199]{takesakiII}).
Similarly, $\xi$ is said to be \emph{right $\varphi$-bounded} 
if $R_\varphi(\xi)\colon a\varphi^{1/2} \mapsto a\xi$, 
$a\in\fn_\varphi$, is bounded. It is \emph{$(\varphi,\psi)$-bounded} (or simply bounded)
if it is simultaneously left $\psi$- and right $\varphi$-bounded. 
The bounded vectors are dense in $\cH$ 
(see Theorem~\ref{thm:ambibounded}).

Let $\theta\colon M\to N$ be a normal completely positive map. 
Associated to it is the $M$-$N$ bimodule $L^2(M\otimes_{\theta} N)$ which is defined to be 
the Hilbert space completion of $M\odot L^2(N)$ under 
the semi-inner product $\ip{ b_1\otimes_\theta \eta_1, b_2\otimes_\theta \eta_2} 
= \ip{\theta(b_2^*b_1)\eta_1,\eta_2}$ on simple tensors. 
Here the symbol $\otimes_\theta$ is used for mnemonic reason. 
The left $M$-action and the right $N$-action are given by 
$a(b\otimes_\theta\eta)x = (ab)\otimes_\theta(\eta x)$.
Suppose now that $\psi\circ\theta \le C\varphi$ for some constant $C>0$, 
and fix analytic elements $b\in\fn_\varphi$ and $y\in\fn_\psi$.
Then, the vector $\xi=b\otimes_\theta (y\psi^{1/2})$ is $(\varphi,\psi)$-bounded. 
Indeed, one has $\| \xi x\| \le \|y^*\theta(b^*b)y\|^{1/2} \|\psi^{1/2}x\|$ and
\begin{align*}
\| a\xi \| &= \|\theta(b^*a^*ab)^{1/2}\psi^{1/2} \sigma^\psi_{i/2}(y)\| \\
 &\le C^{1/2}\|\sigma^\psi_{i/2}(y)\| \| ab\varphi^{1/2}\| 
 \le C^{1/2}\|\sigma^\varphi_{i/2}(b)\| \|\sigma^\psi_{i/2}(y)\| \| a\varphi^{1/2}\|. 
\end{align*}

Let $M$, $N$, and $P$ be von Neumann algebras. The the \emph{relative tensor product} 
${}_M\cH_N{\otimes}_N\cK_P$ of the bimodules ${}_M\cH_N$ and ${}_N\cK_P$ is the 
$M$-$P$ bimodule which is defined to 
be the Hilbert space completion of $\cD(\cH,\psi) \odot \cK$ under the semi-inner product 
$\ip{ \xi_1\otimes_\psi\eta_1,\xi_2\otimes_\psi\eta_2}
 = \ip{ L_\psi(\xi_2^*\times\xi_1) \eta_1,\eta_2}$
on simple tensors. 
If moreover $\eta_i$'s are right $\psi$-bounded, then one also has 
$\ip{ \xi_1\otimes_\psi\eta_1,\xi_2\otimes_\psi\eta_2}
 = \ip{\xi_1,\xi_2 \bar{R}_\psi(\eta_2^*\times\eta_1)}$, where 
$\bar{R}_\psi(\eta_2^*\times\eta_1)=J_\psi R_\psi(\eta_2)^* R_\psi(\eta_1)J_\psi \in N$ 
(\cite[Proposition 3.15]{takesakiII}).
The left $M$-action and the right $P$-action are given 
by $a(\xi\otimes_\psi\eta)x = (a\xi)\otimes_\psi(\eta x)$.
We note that if $\xi$ and $\eta$ are left $\psi$- and left $\omega$-bounded 
(here $\omega$ is an fns weight on $P$), 
then $\xi\otimes_\psi\eta$ is left $\omega$-bounded with 
$\|L_\omega( \xi\otimes_\psi\eta )\|\le \|L_\psi(\xi)\| \|L_\omega(\eta)\|$. 
Also, if $\xi$ and $\eta$ are $(\varphi,\psi)$- and $(\psi,\omega)$-bounded, 
then $\xi\otimes_\psi\eta$ is $(\varphi,\omega)$-bounded with 
$\|R_\varphi(\xi\otimes_\psi\eta )\| \le \|R_\varphi(\xi)\|\| R_\psi(\eta)\|$.
The relative tensor product construction is associative 
and continuous with respect to the weak containments, i.e., 
${}_N\cK_P\preceq{}_N\cK'_P$ implies 
${}_M\cH_N{\otimes}_N\cK_P \preceq {}_M\cH_N{\otimes}_N\cK'_P$, 
and likewise for the first variable. 
We note that there is a canonical isomorphisms ${}_M\cH_N{\otimes}_NL^2(N)_N\cong{}_M\cH_N$ 
via $\xi\otimes_\psi \psi^{1/2}x \leftrightarrow \xi x$ for  
$\xi\in\cD(\cH,\psi)$ and $x\in\fn_\psi^*$. 
Likewise ${}_ML^2(M)_M{\otimes}_M\cH_N\cong{}_M\cH_N$ via 
$a\varphi^{1/2}\otimes_\varphi \xi \leftrightarrow a\xi$ for 
$a\in\fn_\varphi$ and $\xi\in\cH$. 

The following fact is well-known in the case of finite von Neumann 
algebras and the general case is probably also known 
to the specialists, but the authors did not find it in literature. 

\begin{thm}[cf.\ {\cite[1.2.2]{popa}}]\label{thm:ambibounded}
Let $(M,\varphi)$ and $(N,\psi)$ be von Neumann algebras and ${}_M\cH_N$ be an $M$-$N$ bimodule.
Then the $(\varphi,\psi)$-bounded vectors are dense in $\cH$. 
\end{thm}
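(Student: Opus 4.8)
The plan is to start from the density of left $\psi$-bounded vectors (which is cited from Takesaki) and improve an arbitrary left $\psi$-bounded vector to a $(\varphi,\psi)$-bounded one by cutting down with spectral projections of an appropriate positive operator. Given $\xi\in\cD(\cH,\psi)$, the map $R_\varphi(\xi)\colon a\varphi^{1/2}\mapsto a\xi$ is a priori only densely defined and closable; writing $R_\varphi(\xi)^*R_\varphi(\xi)=:T$ as a positive self-adjoint operator affiliated with the right algebra, $\xi$ is right $\varphi$-bounded exactly when $T$ is bounded. The spectral projections $e_n=\chi_{[0,n]}(T)$ lie in $\IB({}_M\cH_N$... more precisely, they are right $N$-module maps commuting with the left $M$-action) and $e_n\to 1$ strongly, so $e_n\xi\to\xi$. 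Each $e_n\xi$ is then right $\varphi$-bounded by construction, and one checks $e_n\xi$ is still left $\psi$-bounded with $L_\psi(e_n\xi)=e_nL_\psi(\xi)$, hence $(\varphi,\psi)$-bounded. Taking the union over $\xi$ in the dense set $\cD(\cH,\psi)$ and over $n$ gives a dense set of $(\varphi,\psi)$-bounded vectors.

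The main technical point — and the step I expect to be the real obstacle — is setting up the operator $R_\varphi(\xi)$ and its functional calculus correctly in the general (type III) setting, since $\varphi$ is only semifinite and the ``obvious'' definition of $R_\varphi(\xi)$ as a bounded operator need not apply. The cleanest route is to work with the commutant: bounded right $N$-module maps on $\cH$ form the commutant von Neumann algebra $\pi_\cH(M\odot 1)'' $'s commutant intersected with... rather, let $\cN:=\IB(\cH_N)$, a von Neumann algebra acting on $\cH$, containing the left $M$-action. On $L^2(N,\psi)$ one similarly has the left $N$-action giving a standard form with modular data $(J_\psi,\Delta_\psi)$. The operator relevant to right $\varphi$-boundedness should be phrased via the GNS construction for $\varphi$ on the left algebra: since $\cH$ is in particular a left $M$-module, apply the left-module version of the cited Takesaki lemma (Lemma~IX.3.3) to get a dense set of \emph{right} $\varphi$-bounded vectors $\cD(\bar\cH,\varphi)$, and then intersect the two dense subspaces. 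But a plain intersection of two dense subspaces can be zero, so one genuinely needs the cutting-down argument: start with $\xi$ left $\psi$-bounded, form $R_\varphi(\xi)$ as a closed operator, and cut by its bounded spectral projections, which automatically preserve left $\psi$-boundedness because they are limits (strongly) of elements of $\IB(\cH_N)$ that commute with the left action, hence commute with $L_\psi(\xi)$ in the appropriate sense.

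Concretely, here is how I would organize the steps. \emph{Step 1:} Fix $\xi\in\cD(\cH,\psi)$. Define $R_\varphi(\xi)$ on the domain $\fn_\varphi\varphi^{1/2}\subset L^2(M,\varphi)$ by $a\varphi^{1/2}\mapsto a\xi$; show it is closable, with closure $\overline{R_\varphi(\xi)}$, by exhibiting $R_\varphi(\xi)^*$ as densely defined (its domain contains right $\varphi$-bounded vectors of $\bar\cH$, which are dense). \emph{Step 2:} Let $\overline{R_\varphi(\xi)}=u|T|$ be the polar decomposition with $|T|=(R_\varphi(\xi)^*\overline{R_\varphi(\xi)})^{1/2}$ positive self-adjoint affiliated with the left von Neumann algebra $M'$ acting on $L^2(M,\varphi)$... more precisely, with $M$ via $|T|\in M_{\mathrm{aff}}$; the key commutation $a|T|\subseteq |T|a$ for $a\in M$ follows since $a$ intertwines the graph of $R_\varphi(\xi)$. \emph{Step 3:} Transport: the spectral projections $p_n=\chi_{[0,n]}(|T|)$ give vectors $\xi_n$ with $R_\varphi(\xi)p_n$ bounded; translate this to $\xi_n:=(\text{appropriate cutdown of }\xi)$ being right $\varphi$-bounded, with $\xi_n\to\xi$. \emph{Step 4:} Verify $\xi_n$ remains left $\psi$-bounded: the cutdown is implemented by an element $q_n\in\IB(\cH_N)$ commuting with the left $M$-action, and $L_\psi(q_n\xi)=q_n L_\psi(\xi)$ is then bounded. \emph{Step 5:} Conclude, as $\xi$ ranges over the dense set $\cD(\cH,\psi)$ and $n\to\infty$. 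The delicate part is Steps 2--3, namely verifying closability and the affiliation/commutation relations for $|T|$ in the semifinite-weight setting without an invariant vector; once that is in place, the functional-calculus cutdown and the left-boundedness check are routine, exactly as in the finite case treated in \cite[1.2.2]{popa}.
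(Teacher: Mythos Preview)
There is a genuine gap at Step~4. The spectral projections $q_n$ you build from $\overline{R_\varphi(\xi)}$ (whether as $\chi_{[0,n]}$ of $|T|$ on $L^2(M,\varphi)$ or of $\overline{R_\varphi(\xi)}\,\overline{R_\varphi(\xi)}^*$ on $\cH$) lie in $\pi_{\cH}(M)'$, because $R_\varphi(\xi)$ intertwines the left $M$-actions; but there is no reason for them to lie in $\IB(\cH_N)=\pi_{\cH}(N^{\opp})'$. Your claimed identity $L_\psi(q_n\xi)=q_nL_\psi(\xi)$ unpacks to $(q_n\xi)x=q_n(\xi x)$ for $x\in N$, which is precisely the statement that $q_n$ commutes with the right $N$-action. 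In general $\pi_{\cH}(M)'\cap\pi_{\cH}(N^{\opp})'$ is tiny: for the identity bimodule $L^2(M)$ over a factor $M$ it is $\IC$, so your procedure yields no nontrivial cutdowns. If instead you try to transport $p_n=\chi_{[0,n]}(|T|)\in J_\varphi M J_\varphi$ back to an element $c_n\in M$ and use $c_n\xi$, you run into the problem that $Jc_n^*J\,a\varphi^{1/2}$ and $ac_n\varphi^{1/2}$ differ unless $c_n$ is modular-analytic, which you have no control over. (A secondary issue: your closability argument in Step~1 does not work as written, since right $\varphi$-boundedness of $\zeta$ does not obviously place $\zeta$ in the domain of $R_\varphi(\xi)^*$.)

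The paper's proof sidesteps this by acting on $\xi$ from the left by elements of $M$ itself rather than of $\pi_{\cH}(M)'$: for $c\in M$ one has $L_\psi(c\xi)=cL_\psi(\xi)$ automatically, so left $\psi$-boundedness is preserved for free. The work is then to produce contractions $c_i\in M$ with $c_i\to1$ strongly and $c_i\xi$ right $\varphi$-bounded. This is carried out first for semifinite $M$ using the trace and noncommutative $L^1$-theory (the density $h\in L^1(M,\tau)_+$ of $\omega_\xi$ and approximants $k_j$ of $\varphi$ give explicit $c_{n,j}$), and then for arbitrary $M$ by passing to the semifinite crossed product $\tilde{M}=M\rtimes_\sigma\IR$ and pulling bounded vectors back through the identification $L^2(\tilde{M})\otimes_M\cH\cong L^2(\IR,\cH)$.
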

\begin{proof}
We first prove the theorem assuming that $M$ is semifinite 
(but $\varphi$ need not be a trace). 
For this, we claim that for every $\xi\in\cH$ there is a net $(c_i)_i$ 
of contractions in $M$ such that $c_i\xi$ are right $\varphi$-bounded and 
$c_i\xi \to \xi$. 
Indeed, fix an fns trace $\tau$ on $M$ and 
view normal states on $M$ as $\tau$-measurable operators on $L^2(M,\tau)$ 
affiliated with $M$ (see \cite[XI.2]{takesakiII} for an account of measurable operators). 
Thus the vector functional $\omega_\xi$ on $M$ corresponds to 
$h\in L^1(M,\tau)_+$ in such a way that $\ip{a\xi,\xi}=\tau(h a)$ 
for $a\in M$. Similarly, take an increasing net $(\varphi_j)_j$ of normal positive 
functionals on $M$ such that $\varphi=\sup\varphi_j$, and denote by $k_j$ the 
element in $L^1(M,\tau)_+$ that corresponds to $\varphi_j$. 
Let $c_{n,j}=\chi_{[n^{-1},\infty)}(k_j)(1+n^{-1}h)^{-1/2} \in M$. 
Since $\varphi$ is faithful, one has $c_{n,j}\to1$ ultrastrongly. 
Moreover, the inequality 
\[
c_{n,j} h c_{n,j}^* 
 = \chi_{[n^{-1},\infty)}(k_j) \frac{h}{1+n^{-1}h} \chi_{[n^{-1},\infty)}(k_j) 
 \le n^2 k_j 
\]
implies that $\|ac_{n,j}\xi\|=\tau(hc_{n,j}^*a^*ac_{n,j})^{1/2}\le n\varphi_j(a^*a)^{1/2}\le n\| a\varphi^{1/2}\|$ 
for every $a\in \fn_\varphi$. 
Thus we obtain the claim. 
Since the space $\cD(\cH,\psi)$ of left $\psi$-bounded vectors is dense 
and is a left $M$-module, we see by the claim that the bounded vectors 
are dense in $\cH$. 

Now let $(M,\varphi)$ be an arbitrary von Neumann algebra 
and denote the modular action by $\sigma$. 
Let $\tilde{M}=M\rtimes_\sigma\IR$ and $\tilde{\varphi}$ 
denote the corresponding crossed product and the dual weight.
We will denote by $\pi\colon M\to\tilde{M}$ the canonical inclusion and 
by $\{u_s : s\in\IR\}$ the unitary elements in $\tilde{M}$ that implement 
the modular action. Thus $u_s\pi(a)u_s^*=\pi(\sigma_s(a))$ for $a\in M$ and
$\tilde{M} = (\pi(M) \cup u_{\IR})''$. 
Since $\tilde{M}$ is semifinite, the result of the previous paragraph says that 
$(\tilde{\varphi},\psi)$-bounded vectors are dense in the relative tensor product 
$\tilde{\cH}:={}_{\tilde{M}}L^2(\tilde{M},\tilde{\varphi})_M{\otimes}_M\cH_N$.
We identify $\tilde{\cH}$ with $L^2(\IR,\cH)$ 
where the left $\tilde{M}$-action is given by 
$(u_s\xi)(t)=\xi(t-s)$ for $s\in\IR$ and $(\pi(a)\xi)(t) = \sigma_t^{-1}(a)\xi(t)$ 
for $a\in M$; and the right $N$-action is given by $(\xi x)(t)=\xi(t)x$ for $x\in N$. 
For every $f \in K(\IR)$ (compactly supported continuous functions), 
we define $L_f\in\IB(\cH_N, L^2(\IR,\cH)_N)$ by 
$L_f\xi=f\otimes\xi$ where $(f\otimes\xi)(t)=f(t)\xi$.
Since $\{ f^**g : f,g\in K(\IR) \}$ has dense span in $L^2(\IR)$, 
the proof of the theorem will be done once we prove 
that $L_{f^**g}^* \zeta \in \cH$ is $(\varphi,\psi)$-bounded
for every $f,g\in K(\IR)$ and every $(\tilde{\varphi},\psi)$-bounded $\zeta\in\tilde{\cH}$. 
That $L_{f^**g}^* \zeta$ is left $\psi$-bounded is obvious. 
A direct computation shows that $a L_{f^**g}^*\zeta = L_g^*\tilde{a}\zeta$ 
for every $a\in M$. Here $\tilde{a} = \int f(s)au_s\,ds \in \tilde{M}$. 
Indeed, for every $\xi\in\cH$ one has 
\begin{align*}
\ip{ a L_{f^**g}^*\zeta, \xi} 
 &= \int \ip{\zeta(t), \int \bar{f}(s)g(t+s)\,ds\, a^*\xi} \,dt\\
 &= \iint f(s) \ip{a\zeta(t-s),g(t)\xi} \,dt\,ds 
 = \ip{\tilde{a}\zeta,L_g\xi}.
\end{align*}
It follows that 
$\|a L_{f^**g}^*\zeta\|
 \le \|L_g\| \|R_{\tilde{\varphi}}(\zeta)\| \| \tilde{a}\tilde{\varphi}^{1/2}\| 
 = \|g\|_2 \|R_{\tilde{\varphi}}(\zeta)\| \|f\|_2 \|a\varphi^{1/2}\|$
for every $a\in\fn_\varphi$, and hence $L_{f^**g}^*\zeta$ is right $\varphi$-bounded also.
\end{proof}

\begin{lem}\label{lem:l2op}
Let $(M,\varphi)$ and $(N,\psi)$ be von Neumann algebras 
and $\xi \in {}_M\cH_N$ be a $(\varphi,\psi)$-bounded vector. 
Then, the completely positive map 
\[
\theta_\xi\colon M\ni a\mapsto L_\psi(\xi)^*aL_\psi(\xi) = L_\psi(\xi^*\times a\xi) \in N
\]
satisfies $\psi\circ\theta_\xi \le \|R_\varphi(\xi)\|^2\varphi$. 
The corresponding operator 
\[
T_{\varphi,\psi}(\xi) \colon L^2(M,\varphi)\ni a\varphi^{1/2} \mapsto \theta_\xi(a)\psi^{1/2}\in L^2(N,\psi), 
\quad a\in\fn_\varphi,
\]
is equal to $L_\psi(\xi)^*R_\varphi(\xi)$. 
Moreover, for every $a\in\fn_\varphi$ and $x\in\fn_\psi$, one has 
\[
\ip{\theta_\xi(a)\psi^{1/2},\psi^{1/2}x^*} = \ip{ a\xi,\xi x^*}
 = \ip{ L_\psi(\xi)^*R_\varphi(\xi) a\varphi^{1/2}, \psi^{1/2}x^*}.
\]
In particular, the $M$-$N$ bimodules 
$\overline{M\xi N}$ and $L^2(M\otimes_{\theta_\xi}N)$ are isomorphic 
via the correspondence 
$a\xi x \leftrightarrow a\otimes_{\theta_\xi}\psi^{1/2}x$ for 
$a\in M$ and $x\in \fn_\psi^*$. 
\end{lem}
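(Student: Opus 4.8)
The plan is to unwind the definitions of $L_\psi(\xi)$ and $R_\varphi(\xi)$ and check the asserted identities by evaluating both sides against the dense subspaces $\fn_\varphi\varphi^{1/2}\subset L^2(M,\varphi)$ and $\psi^{1/2}\fn_\psi^*\subset L^2(N,\psi)$. First I would verify that $\theta_\xi$ is completely positive: since $L_\psi(\xi)\in\IB(L^2(N,\psi)_N,\cH_N)$, the map $a\mapsto L_\psi(\xi)^*aL_\psi(\xi)$ is a composition of the normal $*$-representation $M\to\IB(\cH)$ with a compression, hence normal and completely positive, and its range lies in $\IB(L^2(N,\psi)_N)=N$ by the right $N$-linearity of $L_\psi(\xi)$; the identification with $L_\psi(\xi^*\times a\xi)$ is just the definition of the pairing $L_\psi(\,\cdot\,^*\times\,\cdot\,)$ applied to the vectors $\xi,a\xi\in\cD(\cH,\psi)$, using the noted fact that $a\xi\in\cD(\cH,\psi)$ with $L_\psi(a\xi)=aL_\psi(\xi)$.

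Next, for the weight inequality, I would use the formula $\psi(L_\psi(\xi_2^*\times\xi_1))=\ip{\xi_1,\xi_2}$ recalled in the preliminaries: for $a\in M_+$, writing $a=b^*b$, one gets $\psi(\theta_\xi(a))=\psi(L_\psi((b\xi)^*\times b\xi))=\|b\xi\|^2=\|bR_\varphi(\xi)\varphi^{1/2}\cdots\|^2$; more precisely, since $\xi$ is right $\varphi$-bounded, $R_\varphi(\xi)\in\IB(L^2(M,\varphi)_M,\cH_M)$ intertwines the left $M$-actions, so $b\xi=R_\varphi(\xi)(b\varphi^{1/2})$ when $b\in\fn_\varphi$, and $\|b\xi\|^2\le\|R_\varphi(\xi)\|^2\|b\varphi^{1/2}\|^2=\|R_\varphi(\xi)\|^2\varphi(b^*b)$. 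A standard normality/density argument (approximating a general $a\in M_+$ by elements of the form $b^*b$ with $b\in\fn_\varphi$, or invoking lower semicontinuity of $\psi$) then upgrades this to $\psi\circ\theta_\xi\le\|R_\varphi(\xi)\|^2\varphi$ on all of $M_+$. For the identification $T_{\varphi,\psi}(\xi)=L_\psi(\xi)^*R_\varphi(\xi)$, I would simply evaluate $L_\psi(\xi)^*R_\varphi(\xi)$ on $a\varphi^{1/2}$ with $a\in\fn_\varphi$: this equals $L_\psi(\xi)^*(a\xi)=L_\psi(a\xi)^*$-valued object; unwinding, $L_\psi(\xi)^*(a\xi)$ is by definition the vector $\theta_\xi(a)\psi^{1/2}$ once one checks $a\xi$ is left $\psi$-bounded and $L_\psi(\xi)^*L_\psi(a\xi)=L_\psi(\xi^*\times a\xi)=\theta_\xi(a)\in N$ acts on $\psi^{1/2}$ correctly. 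The displayed triple identity is then immediate: the outer terms both equal $\ip{T_{\varphi,\psi}(\xi)a\varphi^{1/2},\psi^{1/2}x^*}$ by what was just shown, while $\ip{\theta_\xi(a)\psi^{1/2},\psi^{1/2}x^*}=\ip{a\xi,\xi x^*}$ follows from $\ip{L_\psi(\xi)^*(a\xi),\psi^{1/2}x^*}=\ip{a\xi,L_\psi(\xi)(\psi^{1/2}x^*)}=\ip{a\xi,\xi x^*}$.

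Finally, for the bimodule isomorphism $\overline{M\xi N}\cong L^2(M\otimes_{\theta_\xi}N)$, I would define the correspondence on simple tensors by $a\otimes_{\theta_\xi}\psi^{1/2}x\mapsto a\xi x$ for $a\in M$, $x\in\fn_\psi^*$, and check it is inner-product preserving: $\ip{a_1\otimes_{\theta_\xi}\psi^{1/2}x_1,\,a_2\otimes_{\theta_\xi}\psi^{1/2}x_2}=\ip{\theta_\xi(a_2^*a_1)\psi^{1/2}x_1,\psi^{1/2}x_2}$, and this equals $\ip{a_1\xi x_1,a_2\xi x_2}$ by the displayed identity applied with $a=a_2^*a_1$ (after replacing $\psi^{1/2}x_i$ by general elements, using density of $\psi^{1/2}\fn_\psi^*$ and boundedness of both sides in each $\psi^{1/2}x_i$, and right $N$-linearity to pass from $\psi^{1/2}x_i$ to $\psi^{1/2}x$ with the $x$'s incorporated). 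The map clearly intertwines the left $M$- and right $N$-actions by construction, and has dense range $M\xi\fn_\psi^*$, which is dense in $\overline{M\xi N}$ since $\fn_\psi^*$ is dense in $N$ and the right action is bounded; density of the domain is built into the definition of $L^2(M\otimes_{\theta_\xi}N)$.

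The main obstacle I anticipate is purely bookkeeping: carefully tracking the roles of $\fn_\psi$ versus $\fn_\psi^*$ (and analytic elements) so that all the vectors $a\xi$, $\xi x$, $\theta_\xi(a)\psi^{1/2}$ genuinely lie in the bounded/left-bounded subspaces where the pairings $L_\psi(\cdot^*\times\cdot)$ are defined, and then justifying the passage from the dense subspaces to all of $L^2(M,\varphi)$, $L^2(N,\psi)$ by the continuity estimates. There is no deep point; it is a matter of assembling the definitions recalled in the preliminary section in the right order, with Theorem~\ref{thm:ambibounded} guaranteeing that bounded vectors are plentiful enough for everything to make sense.
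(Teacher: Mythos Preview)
Your proposal is correct and follows the same route as the paper: complete positivity is clear, the weight inequality comes from $\psi(\theta_\xi(a^*a))=\|a\xi\|^2\le\|R_\varphi(\xi)\|^2\varphi(a^*a)$ (no density argument is needed, since for $a\notin\fn_\varphi$ the right side is infinite), and the operator identity is checked by testing against a dense set in $L^2(N,\psi)$. The only refinement in the paper is that it tests against $\psi^{1/2}x^*y$ with $x,y\in\fn_\psi$ (noting $\fn_\psi^*\fn_\psi$ is $L^2$-dense) rather than $\psi^{1/2}x^*$, so that $\theta_\xi(a)\cdot\psi^{1/2}y^*=L_\psi(\xi)^*aL_\psi(\xi)\psi^{1/2}y^*$ is a bona fide operator applied to a bona fide vector---this is precisely the $\fn_\psi$-versus-$\fn_\psi^*$ bookkeeping you anticipated.
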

\begin{proof}
That the map is completely positive is clear. 
For $a\in M$, one has
\[
\psi(\theta_\xi(a^*a))=\psi(L_\psi((a\xi)^*\times(a\xi))) = \|a\xi\|^2
 \le \| R_\varphi(\xi)\|^2\varphi(a^*a).
\]
This proves the first assertion. 
It follows that for every $a\in\fn_\varphi$ and $x,y\in\fn_\psi$, one has 
$\theta_\xi(a)\in\fn_\psi$ and 
\[
\ip{\theta_\xi(a) \psi^{1/2}, \psi^{1/2}x^*y}
 = \ip{a\xi y^*,\xi x^*} = \ip{a\xi,\xi x^*y}
 = \ip{R_\varphi(\xi) a\varphi^{1/2},L_\psi(\xi)\psi^{1/2}x^*y}.
\]
Note that $\fn_\psi^*\fn_\psi$ is $L^2$-dense in $\fn_\psi$. 
The proof of the last assertion is routine. 
\end{proof}

Hence, every $M$-$N$ bimodule is isomorphic to a direct sum of 
bimodules of the form $L^2(M\otimes_\theta N)$. 
To complete the picture, recall from a previous paragraph that 
if $\psi\circ\theta \le C\varphi$ for some constant $C>0$ 
and $b\in\fn_\varphi$ and $y\in\fn_\psi$ are analytic elements, 
then $\xi=b\otimes_\theta (y\psi^{1/2})$ is $(\varphi,\psi)$-bounded. 
The completely positive map $\theta_\xi$ arising from Lemma~\ref{lem:l2op} 
is related to the original $\theta$ by the relation 
$\theta_\xi(a) = y^*\theta(b^*ab) y$. 
Indeed, one has 
\[
\ip{\theta_\xi(a) \psi^{1/2} x_1, \psi^{1/2} x_2} 
 = \ip{a \xi x_1,\xi x_2} = \ip{\theta(b^*ab)y\psi^{1/2} x_1,y\psi^{1/2} x_2}
\]
for every $x_1,x_2\in\fn_\psi^*$. Also, it is not difficult to see that 
the maps arising in Lemma~\ref{lem:l2op} are compatible with the relative 
tensor product, as follows.

\begin{lem}\label{lem:composition}
Let $(M,\varphi)$, $(N,\psi)$, and $(P,\omega)$ be von Neumann algebras, 
and $\xi \in {}_M\cH_N$ and $\eta\in {}_N\cK_P$ be bounded vectors. 
Then, 
$\theta_{\xi\otimes_\psi\eta}=\theta_\eta\circ\theta_\xi$ and 
$T_{\varphi,\omega}(\xi\otimes_\psi\eta)=T_{\psi,\omega}(\eta) T_{\varphi,\psi}(\xi)$. 
\end{lem}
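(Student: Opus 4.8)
The plan is to derive both identities from a single matrix-coefficient computation for $\theta_{\xi\otimes_\psi\eta}$, using the last displayed formula of Lemma~\ref{lem:l2op} twice, together with the defining semi-inner product of the relative tensor product. First I would record that $\xi\otimes_\psi\eta$ is a $(\varphi,\omega)$-bounded vector of $\cH\otimes_N\cK$ (noted in the paragraph preceding Theorem~\ref{thm:ambibounded}), so that $\theta_{\xi\otimes_\psi\eta}$ and $T_{\varphi,\omega}(\xi\otimes_\psi\eta)$ are defined; and that, by Lemma~\ref{lem:l2op} applied to $\xi$ and to $\eta$, one has $\theta_\xi(\fn_\varphi)\subset\fn_\psi$ and $\theta_\eta(\fn_\psi)\subset\fn_\omega$, while $\theta_\xi$, $\theta_\eta$, hence $\theta_\eta\circ\theta_\xi$, are normal. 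In particular $\theta_\eta\circ\theta_\xi$ maps $\fn_\varphi$ into $\fn_\omega$.

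For the first identity, I would fix $a\in\fn_\varphi$ and $z\in\fn_\omega$ and chase $\ip{\theta_{\xi\otimes_\psi\eta}(a)\omega^{1/2},\omega^{1/2}z^*}$ through the following chain. The displayed formula of Lemma~\ref{lem:l2op} (with $\omega$ in the role of $\psi$, for the bounded vector $\xi\otimes_\psi\eta$) gives $\ip{a(\xi\otimes_\psi\eta),(\xi\otimes_\psi\eta)z^*}$, which by the bimodule structure of the relative tensor product equals $\ip{(a\xi)\otimes_\psi\eta,\xi\otimes_\psi(\eta z^*)}$. Expanding this via $\ip{\xi_1\otimes_\psi\eta_1,\xi_2\otimes_\psi\eta_2}=\ip{L_\psi(\xi_2^*\times\xi_1)\eta_1,\eta_2}$ and $L_\psi(\xi^*\times a\xi)=L_\psi(\xi)^*aL_\psi(\xi)=\theta_\xi(a)$ turns it into $\ip{\theta_\xi(a)\eta,\eta z^*}$. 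Since $\theta_\xi(a)\in\fn_\psi$, a second application of the displayed formula of Lemma~\ref{lem:l2op}, now to the bounded vector $\eta$ with $\theta_\xi(a)$ filling the "$a$"-slot, rewrites this as $\ip{\theta_\eta(\theta_\xi(a))\omega^{1/2},\omega^{1/2}z^*}$. Hence $\theta_{\xi\otimes_\psi\eta}(a)$ and $\theta_\eta(\theta_\xi(a))$, both lying in $\fn_\omega$, have equal inner products against every $\omega^{1/2}z^*=J_\omega z\omega^{1/2}$, $z\in\fn_\omega$; as these vectors span a dense subspace of $L^2(N,\omega)$ and $b\mapsto b\omega^{1/2}$ is injective on $\fn_\omega$, it follows that $\theta_{\xi\otimes_\psi\eta}(a)=\theta_\eta(\theta_\xi(a))$. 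Since $\fn_\varphi$ is $\sigma$-weakly dense in $M$ and both maps are normal, $\theta_{\xi\otimes_\psi\eta}=\theta_\eta\circ\theta_\xi$ throughout $M$.

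The second identity is then immediate. By Lemma~\ref{lem:l2op} the operators $T_{\varphi,\omega}(\xi\otimes_\psi\eta)$, $T_{\psi,\omega}(\eta)$, $T_{\varphi,\psi}(\xi)$ are bounded, so it suffices to test $T_{\varphi,\omega}(\xi\otimes_\psi\eta)$ and $T_{\psi,\omega}(\eta)T_{\varphi,\psi}(\xi)$ on the dense subspace $\fn_\varphi\varphi^{1/2}$. For $a\in\fn_\varphi$, using the definitions of the $T$-operators and the first identity, $T_{\varphi,\omega}(\xi\otimes_\psi\eta)(a\varphi^{1/2})=\theta_{\xi\otimes_\psi\eta}(a)\omega^{1/2}=\theta_\eta(\theta_\xi(a))\omega^{1/2}$, while $T_{\psi,\omega}(\eta)T_{\varphi,\psi}(\xi)(a\varphi^{1/2})=T_{\psi,\omega}(\eta)\bigl(\theta_\xi(a)\psi^{1/2}\bigr)=\theta_\eta(\theta_\xi(a))\omega^{1/2}$, the last step being legitimate because $\theta_\xi(a)\in\fn_\psi$. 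Hence the two operators agree.

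The computations are routine; the only delicate points are keeping track of the left/right $\psi$- and $\omega$-boundedness hypotheses so that each intermediate vector, notably $(a\xi)\otimes_\psi\eta$ and $\xi\otimes_\psi(\eta z^*)$, is a genuine simple tensor and each invocation of Lemma~\ref{lem:l2op} has its "$a$"-slot filled by an element of the relevant $\fn$, together with the density/normality argument that upgrades agreement on $\fn_\varphi$ to agreement on $M$. An alternative route would be to transport the whole picture through the isomorphisms $\overline{M\xi N}\cong L^2(M\otimes_{\theta_\xi}N)$ and $\overline{N\eta P}\cong L^2(N\otimes_{\theta_\eta}P)$ of Lemma~\ref{lem:l2op} and identify the relative tensor product of these two concrete bimodules with $L^2(M\otimes_{\theta_\eta\circ\theta_\xi}P)$, but the direct route above seems shorter.
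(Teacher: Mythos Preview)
Your argument is correct and is precisely the routine verification the paper has in mind; the paper itself omits the proof entirely, merely remarking that ``it is not difficult to see'' that the maps of Lemma~\ref{lem:l2op} are compatible with the relative tensor product. The chain of equalities you run---Lemma~\ref{lem:l2op} for $\xi\otimes_\psi\eta$, the defining semi-inner product of the relative tensor product, then Lemma~\ref{lem:l2op} for $\eta$---together with the density/normality upgrade, is exactly the intended computation.
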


\section{Mixing bimodules and the Haagerup approximation property}

Recall that a von Neumann algebra $M$ is amenable (or injective) 
if and only if the  identity bimodule  ${}_ML^2(M)_M$ is weakly contained in 
the \emph{coarse bimodule} ${}_ML^2(M)\otimes L^2(M)_M$.
The coarse bimodule has a very strong mixing property. 
The precise notion of mixing property for bimodules has been introduced 
in \cite{bannon-fang} in the case of finite von Neumann algebras 
under the name of ``$C_0$-correspondences.'' 
There it is proved that a finite von Neumann algebra $M$ has 
the HAP if and only if the identity correspondence $L^2(M)$ is weakly 
contained in a $C_0$-correspondence (\cite[Theorem 3.4]{bannon-fang}). 
In this section, we extend this to the general setting. 

\begin{defn}\label{defn:mix}
Let $(M,\varphi)$ and $(N,\psi)$ be von Neumann algebras 
with distinguished fns weights. 
An $M$-$N$ bimodule $\cH$ is said to be \emph{strictly mixing} if 
the family 
\[
\cH_{\mathrm{mix}} 
= \{ \xi\in\cH : \mbox{$\xi$ is $(\varphi,\psi)$-bounded 
and $T_{\varphi,\psi}(\xi)$ is compact}\}
\]
is $M$-$N$ cyclic in $\cH$ (i.e., $M\cH_{\mathrm{mix}}N$ has a dense span in $\cH$). 
As we will see in Corollary~\ref{cor:indep}, the strict mixing property of $\cH$ 
does not depend on the choices of fns weights (but the subset 
$\cH_{\mathrm{mix}}$ does).
\end{defn}

It is not clear whether $\cH_{\mathrm{mix}}$ is always a linear subspace, or 
at least it contains a linear subspace with the same closed linear span. 
However, the infinite multiple of $\cH$ has the latter property, as we will prove below. 
Hence, we may assume that $\cH$ has this property in all cases which are 
dealt in this paper. 

\begin{lem}[cf.\ {\cite[Definition 3.1]{bannon-fang}}]\label{lem:bfdefn}
Let $\theta\colon M\to N$ be a normal completely positive map 
such that $\psi\circ\theta\le C\varphi$ for some constant 
$C>0$ and such that $T_\theta\colon L^2(M,\varphi) \to L^2(N,\psi)$ 
defined by $T_\theta a\varphi^{1/2} = \theta(a)\psi^{1/2}$ 
is compact. Then, the $M$-$N$ bimodule $L^2(M\otimes_\theta N)$ 
is strictly mixing. 
Conversely, if $\cH$ is a strictly mixing $M$-$N$ bimodule such that 
${}_M\cH_N\cong\bigoplus_\kappa{}_M\cH_N$ for the density character $\kappa$ of $\cH$, 
then $\cH$ is isomorphic to a direct sum $\bigoplus_i L^2(M\otimes_{\theta_i} N)$ of 
bimodules as above. In particular, the subset $\cH_{\mathrm{mix}}$ contains a dense 
linear subspace of $\cH$. 
\end{lem}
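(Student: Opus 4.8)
The plan is to prove the two directions separately, using Lemma~\ref{lem:l2op} and Lemma~\ref{lem:composition} as the bridge between bimodules and completely positive maps.

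\medskip

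\noindent\textbf{Plan of proof.}
For the first direction, I would start from $\theta$ with $\psi\circ\theta\le C\varphi$ and $T_\theta$ compact, and exhibit $M$-$N$ cyclic vectors in $L^2(M\otimes_\theta N)$ whose associated operator $T_{\varphi,\psi}$ is compact. The natural candidates are the vectors $\xi = b\otimes_\theta(y\psi^{1/2})$ with $b\in\fn_\varphi$, $y\in\fn_\psi$ analytic; the excerpt already records that these are $(\varphi,\psi)$-bounded, that they are $M$-$N$ cyclic (simple tensors are dense by construction of $L^2(M\otimes_\theta N)$, and analytic elements are dense in $\fn_\varphi,\fn_\psi$), and that $\theta_\xi(a) = y^*\theta(b^*ab)y$. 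From the latter one computes $T_{\varphi,\psi}(\xi) = L_\psi(\xi)^*R_\varphi(\xi)$ acting as $a\varphi^{1/2}\mapsto y^*\theta(b^*ab)y\,\psi^{1/2}$; writing $\theta(b^*ab)y\psi^{1/2} = J_\psi(y^*)^*J_\psi\,\theta(b^*ab)\psi^{1/2} = J_\psi(y^*)^*J_\psi\, T_\theta\, (b^*ab)\varphi^{1/2}$ and noting that $a\varphi^{1/2}\mapsto (b^*ab)\varphi^{1/2}$ is bounded (using analyticity of $b$: it equals $\sigma^\varphi_{-i/2}(b)^* J_\varphi\sigma^\varphi_{-i/2}(b)^*J_\varphi$ on the dense domain), one sees $T_{\varphi,\psi}(\xi)$ is a composition of bounded operators with the compact $T_\theta$, hence compact. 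So $\xi\in\cH_{\mathrm{mix}}$, and these $\xi$ are cyclic; thus $L^2(M\otimes_\theta N)$ is strictly mixing.

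\medskip

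For the converse, let $\cH$ be strictly mixing with $\cH\cong\bigoplus_\kappa\cH$. By Lemma~\ref{lem:l2op}, for every $(\varphi,\psi)$-bounded $\xi\in\cH_{\mathrm{mix}}$ the subbimodule $\overline{M\xi N}\cong L^2(M\otimes_{\theta_\xi}N)$, where $\psi\circ\theta_\xi\le\|R_\varphi(\xi)\|^2\varphi$ and $T_{\varphi,\psi}(\xi)$ is, by the definition of $\cH_{\mathrm{mix}}$, compact; but $T_{\varphi,\psi}(\xi)$ is precisely (up to the identification in Lemma~\ref{lem:l2op}) the operator $T_{\theta_\xi}$ of the statement, so each $\overline{M\xi N}$ is a bimodule of the desired form. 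Since $\cH_{\mathrm{mix}}$ is $M$-$N$ cyclic, $\cH = \overline{\sum_{\xi\in\cH_{\mathrm{mix}}}\overline{M\xi N}}$. The issue is that this is a sum, not a direct sum, of the subbimodules; here is where the hypothesis $\cH\cong\bigoplus_\kappa\cH$ enters: a Hilbert $M$-$N$ bimodule that is a (not necessarily orthogonal) sum of at most $\kappa$ subbimodules, each embeddable in $\cH$, embeds into $\bigoplus_\kappa\cH\cong\cH$; conversely $\cH$ embeds in that direct sum; one then invokes the standard fact that a bimodule which is both a sub- and a super-bimodule of a countably-/$\kappa$-decomposable-type direct sum decomposes as a direct sum of the pieces (a bimodule analogue of the Schr\"oder–Bernstein/Kaplansky argument, cf.\ the reasoning behind every bimodule being $\bigoplus L^2(M\otimes_\theta N)$ noted after Lemma~\ref{lem:l2op}). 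This yields $\cH\cong\bigoplus_i L^2(M\otimes_{\theta_i}N)$. The final ``in particular'' is then immediate: each $L^2(M\otimes_{\theta_i}N)$ contains the dense linear span of the vectors $b\otimes_{\theta_i}(y\psi^{1/2})$ constructed above, all of which lie in $\cH_{\mathrm{mix}}$, so $\cH_{\mathrm{mix}}$ contains a dense linear subspace of $\cH$.

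\medskip

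\noindent\textbf{Main obstacle.}
The routine part is the compactness computation in the first direction, which just chases the identity $T_{\varphi,\psi}(\xi) = L_\psi(\xi)^*R_\varphi(\xi)$ together with $\theta_\xi(a) = y^*\theta(b^*ab)y$ and factors out $T_\theta$. The delicate step is the converse: passing from ``$\cH$ is spanned by subbimodules each of the form $L^2(M\otimes_{\theta}N)$ with compact $T_\theta$'' to an honest \emph{orthogonal} direct sum decomposition. One cannot simply orthogonalize the $\overline{M\xi N}$ since their pairwise ``intersections'' need not be subbimodules in a way that preserves the compactness of the associated operators. The clean fix is exactly the absorption hypothesis $\cH\cong\bigoplus_\kappa\cH$, which lets one replace the sum by a direct sum at the cost of passing to an isomorphic copy; making this precise — identifying $\cH$ with a sub-bimodule of $\bigoplus_i L^2(M\otimes_{\theta_i}N)$ and then back — is the one point that needs care rather than calculation.
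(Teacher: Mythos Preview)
Your approach is essentially the paper's, with two small points worth tightening.

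First, you verify compactness of $T_{\varphi,\psi}(\xi)$ only for simple tensors $\xi=b\otimes_\theta(y\psi^{1/2})$, but then assert that the \emph{linear span} of such vectors lies in $\cH_{\mathrm{mix}}$. Since $T_{\varphi,\psi}(\xi)=L_\psi(\xi)^*R_\varphi(\xi)$ is not additive in $\xi$, this is not automatic; the paper handles it by running the same computation for finite sums $\xi=\sum_k b_k\otimes_\theta(y_k\psi^{1/2})$ from the start, obtaining
\[
T_{\varphi,\psi}(\xi)=\sum_{k,l} y_k^*\,J_\psi\sigma^\psi_{i/2}(y_l)^*J_\psi\; T_\theta\; b_k^*\,J_\varphi\sigma^\varphi_{i/2}(b_l)^*J_\varphi,
\]
which is compact. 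This directly gives a dense linear subspace of $L^2(M\otimes_\theta N)$ inside $\cH_{\mathrm{mix}}$.

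Second, for the converse the paper makes your Schr\"oder--Bernstein step explicit via an Eilenberg swindle: with $\cK_i=\overline{M\xi_iN}$ for an $M$-$N$ cyclic family $\{\xi_i\}_{i\in\kappa}\subset\cH_{\mathrm{mix}}$, each $\cK_i$ sits in $\cH\cong\bigoplus_\kappa\cH$, so $\cH\cong\cH\oplus\cK_i$; hence $\cH\cong\bigoplus_i(\cH\oplus\cK_i)$, while $\cH\hookrightarrow\bigoplus_i\cK_i$ (as a quotient, hence sub-bimodule) gives $\cH\oplus\bigoplus_i\cK_i\cong\bigoplus_i\cK_i$, and combining yields $\cH\cong\bigoplus_i\cK_i$. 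Finally, to pass from dense linear subspaces of the individual $\cK_i$ to one for $\bigoplus_i\cK_i$, the paper records that $T_{\varphi,\psi}\bigl(\sum_i\xi_i\bigr)=\sum_i T_{\varphi,\psi}(\xi_i)$ when the $\xi_i$ lie in orthogonal sub-bimodules; you should add this observation as well.
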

\begin{proof}
We will prove that $L^2(M\otimes_\theta N)_{\mathrm{mix}}$ contains 
a dense linear subspace. 
We stick to the notations used in the previous section and put 
$\xi = \sum_k b_k\otimes_\theta (y_k\psi^{1/2})$ for finite sequences 
$b_k\in\fn_\varphi$, $y_k\in\fn_\psi$ of analytic elements. 
Then, by the remark following Lemma~\ref{lem:l2op}, one has 
\begin{align*}
\theta_\xi(a)\psi^{1/2}
 &= \sum_{k,l} y_k^* \theta(b_k^*ab_l) y_l\psi^{1/2} 
 = \sum_{k,l} y_k^*J_\psi\sigma^\psi_{i/2}(y_l)^*J_\psi\theta(b_k^*ab_l)\psi^{1/2} \\
 &= \sum_{k,l} y_k^*J_\psi\sigma^\psi_{i/2}(y_l)^*J_\psi T_\theta b_k^*J_\varphi\sigma^\varphi_{i/2}(b_l)^*J_\varphi a\varphi^{1/2}.
\end{align*}
Since $T_\theta$ is compact, the operator 
$T_{\varphi,\psi}(\xi) =\sum_{k,l} y_k^*J_\psi\sigma^\psi_{i/2}(y_l)^*J_\psi T_\theta b_k^*J_\varphi\sigma^\varphi_{i/2}(b_l)^*J_\varphi$ 
is also. 

Now let $\{\xi_i : i\in\kappa\}$ be an $M$-$N$ cyclic family in $\cH_{\mathrm{mix}}$ and 
$\cK_i:=\overline{M\xi_i N}\subset\cH$. 
Then, $\cH$ is isomorphic to a subbimodule of $\bigoplus\cK_i$, and 
$\cK_i\cong L^2(M\otimes_{\theta_{\xi_i}} N)$ for each $i$ by Lemma~\ref{lem:l2op}. 
Since $\cH$ has infinite multiplicity, $\cH\cong\cH\oplus\cK_i$ for each $i$ 
and $\cH\oplus\bigoplus_i\cK_i\cong\bigoplus_i\cK_i$. 
It follows that $\cH\cong\bigoplus_i(\cH\oplus\cK_i)\cong\bigoplus_i\cK_i$. 
Note that for any finite sequence $\xi_i\in(\cK_i)_{\mathrm{mix}}$, one has 
$\sum_i\xi_i \in (\bigoplus_i\cK_i)_{\mathrm{mix}}$, since $T_{\varphi,\psi}(\sum_i\xi_i)=\sum_iT_{\varphi,\psi}(\xi_i)$. 
\end{proof}

Although it will not be used, we note the following fact. The converse is not clear. 
\begin{lem}[cf.\ {\cite[Definition 2.3]{peterson-sinclair}}]
A strictly mixing $M$-$N$ bimodule $\cH$ is mixing in the sense that 
$\ip{ a_n\xi x_n,\xi}\to0$ for any $\xi\in\cH$ and any bounded nets 
$(a_n)_n$ in $M$ and $(x_n)_n$ in $N$, one of which is ultraweakly null. 
\end{lem}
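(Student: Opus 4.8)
The plan is to reduce, using the stability properties of the mixing condition, to the model bimodule $L^2(M\otimes_\theta N)$ with $T_\theta$ compact, and then to prove mixing there by hand; the decisive input will be that compactness of $T_\theta$ forces certain operators $\theta(c_n)$ to tend to $0$ not merely ultraweakly but $*$-strongly on $L^2(N,\psi)$. First I would record three elementary facts. Since $(a,x)\mapsto\ip{a\xi x,\xi}$ is bounded by $\|a\|\,\|x\|\,\|\xi\|^2$ in $\xi$, mixing of $\cH$ is unaffected if $\xi$ is replaced by a vector from a dense subspace; mixing of $\bigoplus_i\cK_i$ follows from mixing of each $\cK_i$, since for $\xi=(\xi_i)_i$ the tail $\sum_{i\notin F}\ip{a_n\xi_i x_n,\xi_i}$ is dominated by $\sup_n\|a_n\|\,\sup_n\|x_n\|\sum_{i\notin F}\|\xi_i\|^2$; and $\cH$ is mixing once a large amplification of it is (restrict to the first summand). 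Such an amplification is again strictly mixing (one has $T_{\varphi,\psi}(\xi\otimes e_0)=T_{\varphi,\psi}(\xi)$, and cyclicity is clear), so by Lemma~\ref{lem:bfdefn} we may assume $\cH=\bigoplus_i L^2(M\otimes_{\theta_i}N)$ with each $T_{\theta_i}$ compact, and it then suffices to treat a single $\cK=L^2(M\otimes_\theta N)$ with $T_\theta$ compact. In $\cK$ the vectors $\sum_k b_k\otimes_\theta\eta_k$ with $b_k\in\fn_\varphi$ and $\eta_k\in L^2(N)$ are dense (because $\fn_\varphi$ is ultrastrongly dense in $M$ --- via Kaplansky density and normality of $\theta$). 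Expanding $\ip{a_n\xi x_n,\xi}$ for such a $\xi$, everything comes down to proving
\[
\ip{\theta(c_n)\,(\eta_k x_n),\,\eta_l}\longrightarrow 0\quad\text{for each }(k,l),\qquad c_n:=b_l^* a_n b_k .
\]
Here $c_n$ and $c_n^*$ lie in $\fn_\varphi$ (a left ideal), so $\theta(c_n),\theta(c_n)^*\in\fn_\psi$ and $\theta(c_n)\psi^{1/2}=T_\theta(c_n\varphi^{1/2})$, $\theta(c_n)^*\psi^{1/2}=T_\theta(c_n^*\varphi^{1/2})$ (using that the completely positive $\theta$ is $*$-preserving); and --- this is the reason for insisting $b_k,b_l\in\fn_\varphi$ --- $c_n\varphi^{1/2}$ and $c_n^*\varphi^{1/2}$ form norm-bounded families in $L^2(M,\varphi)$.

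Suppose first that $(a_n)_n$ is ultraweakly null. Then $c_n$ and $c_n^*$ are ultraweakly null, and $c_n\varphi^{1/2}\to 0$ weakly in $L^2(M,\varphi)$: for $d\in\fn_\varphi$, $\ip{c_n\varphi^{1/2},d\varphi^{1/2}}=\varphi\bigl((b_l d)^* a_n b_k\bigr)=\ip{a_n(b_k\varphi^{1/2}),(b_l d)\varphi^{1/2}}$ is an ultraweakly continuous functional of $a_n$, and similarly $c_n^*\varphi^{1/2}\to 0$ weakly. As $T_\theta$ is compact, $\theta(c_n)\psi^{1/2}\to 0$ and $\theta(c_n)^*\psi^{1/2}\to 0$ in norm. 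From this I would conclude that the operators of left multiplication by $\theta(c_n)$ on $L^2(N,\psi)$ converge to $0$ $*$-strongly: indeed $\sup_n\|\theta(c_n)\|<\infty$, the vectors $v\psi^{1/2}$ with $v\in\fn_\psi$ analytic are dense, and $\theta(c_n)(v\psi^{1/2})=(\theta(c_n)v)\psi^{1/2}=(\theta(c_n)\psi^{1/2})\,\sigma^\psi_{i/2}(v)$ has norm at most $\|\sigma^\psi_{i/2}(v)\|\,\|\theta(c_n)\psi^{1/2}\|\to 0$, and the same computation with $\theta(c_n)^*$ handles adjoints. Hence
\[
\ip{\theta(c_n)(\eta_k x_n),\eta_l}=\ip{\eta_k x_n,\;\theta(c_n)^*\eta_l}\longrightarrow 0,
\]
as $\|\eta_k x_n\|\le\sup_n\|x_n\|\cdot\|\eta_k\|$ is bounded and $\|\theta(c_n)^*\eta_l\|\to 0$.

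The case in which $(x_n)_n$ rather than $(a_n)_n$ is ultraweakly null I would handle by subnets. Along a subnet with $a_n\to a$ ultraweakly, the previous paragraph applied to $a_n-a$ shows that left multiplication by $\theta(c_n)-\theta(c)$ tends to $0$ $*$-strongly, where $c=b_l^* a b_k$. Splitting
\[
\ip{\theta(c_n)(\eta_k x_n),\eta_l}=\ip{\eta_k x_n,\;(\theta(c_n)-\theta(c))^*\eta_l}+\ip{\theta(c)(\eta_k x_n),\eta_l},
\]
the first summand tends to $0$ by $*$-strong convergence and boundedness, and the second because $\eta_k x_n\to 0$ weakly in $L^2(N)$ --- the right $N$-action being implemented by the normal map $x\mapsto J_\psi x^* J_\psi$ --- while $\theta(c)^*\eta_l$ is fixed. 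Since every subnet admits such a further subnet, the original net converges to $0$.

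The step I expect to be the main obstacle is exactly the passage from ultraweak to $*$-strong convergence of $\theta(c_n)$: normality of $\theta$ alone gives only weak-operator convergence, which is useless here because both the operator $\theta(c_n)$ and the vector $\eta_k x_n$ on which it acts vary with $n$. Making this work for general, not necessarily finite, weights $\varphi,\psi$ --- in particular ensuring that $c_n\varphi^{1/2}$ is a well-defined bounded family in $L^2(M,\varphi)$, which is what forces $b_k\in\fn_\varphi$ in the dense set, and invoking the modular identity $(bv)\psi^{1/2}=(b\psi^{1/2})\,\sigma^\psi_{i/2}(v)$ for $b\in\fn_\psi$ and analytic $v$ --- is the only genuinely technical point; the reductions are routine bookkeeping.
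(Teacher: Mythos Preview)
Your argument is correct, but the paper's proof takes a much more direct route after the same initial reduction. You and the paper both (implicitly or explicitly) pass to an infinite amplification so as to invoke Lemma~\ref{lem:bfdefn} and assume that $\cH_{\mathrm{mix}}$ contains a dense linear subspace; this is unavoidable, since $M$-$N$ cyclicity alone does not give the off-diagonal terms needed to pass from $\cH_{\mathrm{mix}}$ to its span. From that point on, however, the paper never descends to the concrete model $L^2(M\otimes_\theta N)$: for $\xi\in\cH_{\mathrm{mix}}$ it simply picks $p\in\fn_\varphi$, $q\in\fn_\psi$ with $\|p\xi-\xi\|+\|\xi-\xi q^*\|<\varepsilon$ and uses the identity of Lemma~\ref{lem:l2op},
\[
\ip{a_np\xi x_n,\xi q^*}=\ip{T_{\varphi,\psi}(\xi)\,a_np\,\varphi^{1/2},\;\psi^{1/2}q^*x_n^*},
\]
observing that $(a_np\,\varphi^{1/2})_n$ and $(\psi^{1/2}q^*x_n^*)_n$ are bounded nets in $L^2(M)$ and $L^2(N)$, one of which is weakly null, so compactness of $T_{\varphi,\psi}(\xi)$ kills the pairing. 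This handles both cases ($a_n$ null or $x_n$ null) symmetrically in a single line, with no need for your upgrade from ultraweak to $*$-strong convergence of $\theta(c_n)$, no case split, and no subnet argument.

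What your approach buys is explicitness: everything is computed inside the concrete bimodule, and the role of the modular identity $(wv)\psi^{1/2}=(w\psi^{1/2})\sigma^\psi_{i/2}(v)$ is made visible. What the paper's approach buys is economy and conceptual clarity: the entire content of the lemma is that a compact operator sends weakly null bounded nets to norm-null ones, and Lemma~\ref{lem:l2op} already packages the bimodule coefficient $\ip{a\xi,\xi x^*}$ in exactly the form needed to exploit this, so the detour through $*$-strong convergence is unnecessary.
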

\begin{proof}
Since $(a_n)_n$ and $(x_n)_n$ are bounded nets, 
we may assume that $\xi\in\cH_{\mathrm{mix}}$. 
Given $\varepsilon>0$, take $p\in\fn_\varphi$ and $q\in\fn_\psi$ 
such that $\|p\xi-\xi\|+\|\xi-\xi q^*\|<\varepsilon$. 
Then, $(a_np\varphi^{1/2})_n$ and $(\psi^{1/2} q^*x_n^*)_n$ are 
bounded nets respectively in $L^2(M)$ and $L^2(N)$, one of which is weakly null. 
Hence $\ip{a_np\xi x_n,\xi q^*}=\ip{T_{\varphi,\psi}(\xi)a_np\varphi^{1/2},\psi^{1/2} q^*x_n^*}\to0$, 
by the strict mixing property. 
Since $\varepsilon>0$ was arbitrary, one concludes that $\ip{ a_n\xi x_n,\xi}\to0$.
\end{proof}

\begin{prop}\label{prop:mixingtensor}
Let $(M,\varphi)$,  $(N,\psi)$, and $(P,\omega)$ be 
von Neumann algebras. 
Then the relative tensor product bimodule ${}_M\cH_N{\otimes}_N\cK_P$ is 
strictly mixing if one of ${}_M\cH_N$ and ${}_N\cK_P$ is strictly mixing. 
\end{prop}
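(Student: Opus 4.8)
The plan is to reduce to the two cases according to which factor is strictly mixing, and in each case to produce an $M$-$P$ cyclic family of $(\varphi,\omega)$-bounded vectors in ${}_M\cH_N\otimes_N\cK_P$ whose associated operators $T_{\varphi,\omega}$ are compact. The natural building blocks are elementary tensors $\xi\otimes_\psi\eta$ with $\xi\in{}_M\cH_N$ and $\eta\in{}_N\cK_P$ both bounded: by the remark preceding Lemma~\ref{lem:composition} such tensors are $(\varphi,\omega)$-bounded, and by Lemma~\ref{lem:composition} we have the factorization $T_{\varphi,\omega}(\xi\otimes_\psi\eta)=T_{\psi,\omega}(\eta)\,T_{\varphi,\psi}(\xi)$. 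Since a product of bounded operators is compact as soon as one factor is compact, it follows immediately that if $\xi\in\cH_{\mathrm{mix}}$ (so $T_{\varphi,\psi}(\xi)$ is compact), then $T_{\varphi,\omega}(\xi\otimes_\psi\eta)$ is compact, and symmetrically if $\eta\in\cK_{\mathrm{mix}}$.

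First I would treat the case where ${}_N\cK_P$ is strictly mixing. By Theorem~\ref{thm:ambibounded} the $(\varphi,\psi)$-bounded vectors are dense in $\cH$, so in particular the elementary tensors $\xi\otimes_\psi\eta$ with $\xi$ $(\varphi,\psi)$-bounded and $\eta$ ranging over $\cK_{\mathrm{mix}}$ have the property that their $M$-$P$ bimodule span is dense: indeed $M(\xi\otimes_\psi\eta)P\supseteq (M\xi)\otimes_\psi(\eta P)$, and $\{a\xi: a\in M,\ \xi \text{ bounded}\}$ is dense in $\cH$ while $\cK_{\mathrm{mix}}$ is $N$-$P$ cyclic in $\cK$; one then checks that $\overline{\mathrm{span}}\,\{\xi\otimes_\psi\eta\}$ with these restrictions still exhausts the relative tensor product, using that $\cD(\cH,\psi)\odot\cK$ is dense by construction together with the density of bounded vectors. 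By the factorization above each such tensor lies in $({}_M\cH_N\otimes_N\cK_P)_{\mathrm{mix}}$, giving strict mixing. The case where ${}_M\cH_N$ is strictly mixing is entirely symmetric, using that for $\xi\in\cH_{\mathrm{mix}}$ and any bounded $\eta\in\cK$ the tensor $\xi\otimes_\psi\eta$ is $(\varphi,\omega)$-bounded with $T_{\varphi,\omega}(\xi\otimes_\psi\eta)$ compact, and that $\cH_{\mathrm{mix}}$ is $M$-$N$ cyclic.

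The main obstacle I anticipate is the density/cyclicity bookkeeping: one must verify carefully that restricting the left tensor leg to $(\varphi,\psi)$-bounded (indeed, to bounded) vectors and the right leg to $\cK_{\mathrm{mix}}$ still yields a set whose $M$-$P$ bimodule span is dense in ${}_M\cH_N\otimes_N\cK_P$. This requires knowing that bounded vectors of $\cH$ are dense (Theorem~\ref{thm:ambibounded}), that $\cK_{\mathrm{mix}}$ is $N$-$P$ cyclic (hypothesis), and that the relative tensor product is continuous in each variable with respect to approximation of the legs in norm — a fact one can extract from $\|\xi\otimes_\psi\eta\|\le$ (an estimate in terms of $\|L_\psi(\xi)\|$ or $\|R_\psi(\eta)\|$ and the norm of the other vector) stated in the preliminary section. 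Once this is in place, the compactness half is immediate from Lemma~\ref{lem:composition}, so the entire content of the proof is the cyclicity argument, which is routine but must be spelled out.

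\begin{prop}[placeholder to not break compilation]\end{prop}
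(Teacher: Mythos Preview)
Your approach is correct and is precisely what the paper does: its proof reads in full ``This follows from Theorem~\ref{thm:ambibounded} and Lemma~\ref{lem:composition}.'' You have simply unpacked the cyclicity bookkeeping that the paper leaves implicit, and your identification of the two needed inputs---density of bounded vectors from Theorem~\ref{thm:ambibounded} and the factorization $T_{\varphi,\omega}(\xi\otimes_\psi\eta)=T_{\psi,\omega}(\eta)T_{\varphi,\psi}(\xi)$ from Lemma~\ref{lem:composition}---matches the paper exactly.
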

\begin{proof}
This follows from Theorem~\ref{thm:ambibounded} and Lemma~\ref{lem:composition}.
\end{proof}

\begin{cor}\label{cor:indep}
The strict mixing property of an $M$-$N$ bimodule $\cH$ 
does not depend on the choices of fns weights. 
\end{cor}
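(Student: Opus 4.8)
The plan is to reduce the independence statement to the transitivity of the relative tensor product with identity bimodules, which carry the trivial completely positive map and hence are strictly mixing only in a degenerate sense --- so a direct application of Proposition~\ref{prop:mixingtensor} needs care. Concretely, suppose $\cH$ is strictly mixing with respect to a choice of fns weights $(\varphi,\psi)$ on $(M,N)$, and let $(\varphi',\psi')$ be another choice. The canonical isomorphisms recalled in the previous section give
\[
{}_M\cH_N \cong {}_ML^2(M)_M{\otimes}_M\cH_N{\otimes}_NL^2(N)_N,
\]
and the point is that the left-hand $L^2(M)$ can be regarded with weight $\varphi'$ and the right-hand $L^2(N)$ with weight $\psi'$, while the middle $\cH$ retains the weights $(\varphi,\psi)$ under which it is strictly mixing. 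So the first step is to observe that a bimodule of the form ${}_PL^2(P)_P$ (for any $P$) is strictly mixing in the degenerate-but-usable sense that its identity element acts as a unit: more precisely, I would check that $\cH_{\mathrm{mix}}$ for ${}_M\cH_N$ with weights $(\varphi,\psi)$ stays cyclic after tensoring on the left by ${}_ML^2(M,\varphi')_M$ and on the right by ${}_NL^2(N,\psi')_N$, because those tensorings recover $\cH$ up to isomorphism.

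The cleaner route, and the one I would actually write, is this. By Lemma~\ref{lem:composition}, if $\xi\in\cH_{\mathrm{mix}}$ (with respect to $(\varphi,\psi)$) and if $\alpha\in{}_ML^2(M,\varphi')_M$, $\beta\in{}_NL^2(N,\psi')_N$ are suitable bounded vectors (say $\alpha = a\varphi'^{1/2}$, $\beta = \psi'^{1/2}x$ for analytic $a\in\fn_{\varphi'}$, $x\in\fn_{\psi'}^*$), then $\alpha\otimes_\varphi\xi\otimes_\psi\beta$ is a $(\varphi',\psi')$-bounded vector in the relative tensor product, and
\[
T_{\varphi',\psi'}(\alpha\otimes_\varphi\xi\otimes_\psi\beta) = T_{\psi,\psi'}(\beta)\,T_{\varphi,\psi}(\xi)\,T_{\varphi',\varphi}(\alpha).
\]
Since $T_{\varphi,\psi}(\xi)$ is compact and the two flanking operators are bounded, this composition is compact. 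Under the canonical identification ${}_ML^2(M)_M{\otimes}_M\cH_N{\otimes}_NL^2(N)_N\cong{}_M\cH_N$, the vectors $\alpha\otimes_\varphi\xi\otimes_\psi\beta$ of this form correspond to $a\xi x$, and as $\xi$ ranges over a cyclic subset of $\cH_{\mathrm{mix}}$ and $a,x$ range over (analytic elements of) $\fn_{\varphi'}$, $\fn_{\psi'}^*$, the vectors $a\xi x$ have dense span in $M\cH_{\mathrm{mix}}N$, hence in $\cH$. Therefore $\cH$ contains a cyclic family of $(\varphi',\psi')$-bounded vectors on which $T_{\varphi',\psi'}$ is compact, i.e.\ $\cH$ is strictly mixing with respect to $(\varphi',\psi')$ as well.

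The one genuinely delicate point is verifying that the relative-tensor-product identifications ${}_ML^2(M)_M{\otimes}_M\cH_N\cong{}_M\cH_N$ and ${}_M\cH_N{\otimes}_NL^2(N)_N\cong{}_M\cH_N$ interact correctly with the choice of weights: the left isomorphism is $a\varphi'^{1/2}\otimes_{\varphi'}\xi\leftrightarrow a\xi$ when the $L^2(M)$ factor carries weight $\varphi'$ and the relative tensor product over $M$ is formed using $\varphi'$, so one must form the double relative tensor product using $\varphi'$ on the left copy, $\varphi$ in the middle slot of $\cH$... wait, but $\cH$ is a single bimodule, so there is only one left $M$-action; the relative tensor product ${}_ML^2(M,\varphi')_M{\otimes}_M\cH_N$ is formed over $M$ using any fixed weight, and the resulting bimodule is independent of that choice up to the canonical isomorphism. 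So the real content is just that $\cD(\cH,\psi)$-style density, the compatibility in Lemma~\ref{lem:composition}, and the boundedness of $T_{\varphi',\varphi}(\alpha)$ and $T_{\psi,\psi'}(\beta)$ for $\alpha,\beta$ of the stated analytic form --- each of which is already available --- combine to finish the argument. I expect the main obstacle to be purely bookkeeping: keeping straight which weight indexes which leg of the relative tensor product, and confirming that enough $\alpha$ and $\beta$ (analytic, bounded in both senses) exist to recover a dense subspace, which follows from density of analytic elements together with Theorem~\ref{thm:ambibounded}.
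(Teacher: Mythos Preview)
Your argument is correct and follows essentially the same approach as the paper: tensor $\cH$ on the left by ${}_ML^2(M)_M$ viewed as an $(M,\varphi')$-$(M,\varphi)$ bimodule (and similarly on the right), use that the relative tensor product of a strictly mixing bimodule with anything is strictly mixing, and then invoke the canonical isomorphism with $\cH$. The paper simply cites Proposition~\ref{prop:mixingtensor} directly and treats one weight at a time, whereas you unpack its content via Lemma~\ref{lem:composition} and do both sides at once; the bookkeeping worries you raise are real but resolve exactly as you say.
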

\begin{proof}
Let $\cH$ be a strictly mixing 
$(M,\varphi)$-$(N,\psi)$ bimodule, and let $\varphi_0$ 
be another fns weight on $M$.
Let us view $L^2(M)$ as 
an $(M,\varphi_0)$-$(M,\varphi)$ bimodule.
Then by Proposition~\ref{prop:mixingtensor}, 
the relative tensor product ${}_ML^2(M)_M{\otimes}_M\cH_N$ 
is a strictly mixing $(M,\varphi_0)$-$(N,\psi)$ bimodule, but 
it is canonically isomorphic to $\cH$ as an $M$-$N$ bimodule. 
So the strict mixing property does not depend on the choice 
of $\varphi$, and similarly neither on $\psi$. 
\end{proof}

There are several equivalent definitions of 
the Haagerup approximation property (HAP) for a von Neumann algebra $M$, 
but in the end they are equivalent to that the finite 
von Neumann algebra $p(M\rtimes_\sigma\IR)p$ has the HAP 
for a modular action $\sigma$ and a finite projection $p$ with 
full central support (see \cite{cost,cs1,cs2,jolissaint,ot1,ot2}). 
We recall that such $p(M\rtimes_\sigma\IR)p$ is \emph{amenably equivalent} 
(in the sense of Anantharaman-Delaroche \cite{delaroche}) 
to the original $M$. 
Here we say an $M$-$N$ bimodule $\cH$ is \emph{left amenable} 
if the identity bimodule  ${}_ML^2(M)_M$ is weakly contained in 
${}_M\cH_N{\otimes}_N\bar{\cH}_M$;
and $M$ and $N$ are amenably equivalent if there exist a left amenable $M$-$N$ bimodule 
and a left amenable $N$-$M$ bimodule. See \cite{delaroche} for the details. 
In fact, it is not too difficult to see that, 
for any crossed product $\tilde{M}=M\rtimes_\sigma G$ of a von Neumann algebra $M$ 
by an amenable locally compact group $G$, 
both ${}_ML^2(\tilde{M})_{\tilde{M}}$ and ${}_{\tilde{M}}L^2(\tilde{M})_M$ are left amenable. 
We note that the relative tensor product of two left amenable bimodules 
is again left amenable by continuity (\cite[2.13]{delaroche}). 
The following theorem is reminiscent of the fact (\cite{el}) that 
a von Neumann algebra $M$ is \emph{semi-discrete} 
if and only if it is \emph{amenable}: ${}_ML^2(M)_M\preceq{}_ML^2(M)\otimes L^2(M)_M$.

\begin{thm}\label{thm:bihap}
For a von Neumann algebra $M$, the following are equivalent. 
\begin{enumerate}[(1)]
\item\label{con:b1}
$M$ has the HAP. 
\item\label{con:b2}
The identity bimodule $L^2(M)$ is weakly contained in 
a strictly mixing $M$-$M$ bimodule.
\item\label{con:b3}
There are a von Neumann algebra $N$ and a strictly mixing $M$-$N$ bimodule 
which is left amenable. 
\end{enumerate}
\end{thm}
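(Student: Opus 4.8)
The plan is to prove the cycle $(\ref{con:b1})\Rightarrow(\ref{con:b3})\Rightarrow(\ref{con:b2})\Rightarrow(\ref{con:b1})$, passing through the semifinite crossed product in order to reduce everything to the known finite-case picture.

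For $(\ref{con:b1})\Rightarrow(\ref{con:b3})$, fix a modular action $\sigma$ and set $\tilde M = M\rtimes_\sigma\IR$ with dual weight $\tilde\varphi$; pick a finite projection $p\in\tilde M$ with full central support, so that $p\tilde Mp$ is a finite von Neumann algebra with the HAP. By Choda's theorem (via \cite{bannon-fang}) the finite algebra $p\tilde Mp$ admits a net of unital trace-preserving completely positive maps with $L^2$-compact associated operators converging pointwise to the identity; standard amplification and cut-down arguments promote this to a single normal completely positive map $\theta\colon\tilde M\to\tilde M$ with $\tilde\varphi\circ\theta\le C\tilde\varphi$ and $T_\theta$ compact — equivalently, a strictly mixing $\tilde M$-$\tilde M$ bimodule $\cK:=L^2(\tilde M\otimes_\theta\tilde M)$ by Lemma~\ref{lem:bfdefn}, which moreover weakly contains ${}_{\tilde M}L^2(\tilde M)_{\tilde M}$. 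Now take $N=\tilde M$ and form the $M$-$N$ bimodule
\[
\cH := {}_ML^2(\tilde M)_{\tilde M}\,{\otimes}_{\tilde M}\,\cK .
\]
By Proposition~\ref{prop:mixingtensor} this is strictly mixing (the second factor is). For left amenability we compute, using associativity of the relative tensor product and ${}_{\tilde M}\cK_{\tilde M}\preceq{}_{\tilde M}L^2(\tilde M)_{\tilde M}$ together with the remark that ${}_ML^2(\tilde M)_{\tilde M}$ is left amenable (crossed product by the amenable group $\IR$):
\[
{}_M\cH_{\tilde M}{\otimes}_{\tilde M}\bar\cH_M
 \;\cong\; {}_ML^2(\tilde M)_{\tilde M}{\otimes}_{\tilde M}(\cK{\otimes}_{\tilde M}\bar\cK){\otimes}_{\tilde M}L^2(\tilde M)_M
 \;\succeq\; {}_ML^2(M)_M ,
\]
where the last weak containment uses that $\cK{\otimes}_{\tilde M}\bar\cK\succeq{}_{\tilde M}L^2(\tilde M)_{\tilde M}$ (automatic, as $\cK$ contains the identity bimodule) and that the relative tensor product of left amenable bimodules is left amenable. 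Hence $\cH$ is a strictly mixing left amenable $M$-$N$ bimodule.

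For $(\ref{con:b3})\Rightarrow(\ref{con:b2})$, let $\cH$ be strictly mixing and left amenable over $(M,N)$. Then $\bar\cH$ is an $N$-$M$ bimodule which is strictly mixing (the mixing condition is symmetric under conjugation, since $T_{\psi,\varphi}(\bar\xi)$ is essentially the adjoint of $T_{\varphi,\psi}(\xi)$), so by Proposition~\ref{prop:mixingtensor} the $M$-$M$ bimodule $\cH{\otimes}_N\bar\cH$ is strictly mixing; and left amenability is exactly the statement ${}_ML^2(M)_M\preceq{}_M\cH_N{\otimes}_N\bar\cH_M$. So $L^2(M)$ is weakly contained in the strictly mixing bimodule $\cH{\otimes}_N\bar\cH$.

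For $(\ref{con:b2})\Rightarrow(\ref{con:b1})$, suppose ${}_ML^2(M)_M\preceq\cH$ with $\cH$ strictly mixing. Tensoring on both sides by ${}_{\tilde M}L^2(\tilde M)_M$ and its conjugate (where $\tilde M=M\rtimes_\sigma\IR$) and using continuity of the relative tensor product under weak containment, we get
\[
{}_{\tilde M}L^2(\tilde M)_{\tilde M}\;\cong\;{}_{\tilde M}L^2(\tilde M)_M{\otimes}_ML^2(M)_M{\otimes}_ML^2(\tilde M)_{\tilde M}\;\preceq\;{}_{\tilde M}L^2(\tilde M)_M{\otimes}_M\cH{\otimes}_ML^2(\tilde M)_{\tilde M},
\]
and the right-hand side is a strictly mixing $\tilde M$-$\tilde M$ bimodule by Proposition~\ref{prop:mixingtensor}. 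Cutting down by a finite projection $p$ with full central support yields a strictly mixing $p\tilde Mp$-$p\tilde Mp$ bimodule weakly containing the identity bimodule of the \emph{finite} algebra $p\tilde Mp$; by \cite[Theorem 3.4]{bannon-fang} this gives $p\tilde Mp$ the HAP, hence $M$ has the HAP by the standard characterization recalled above.

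The main obstacle I anticipate is the bookkeeping in $(\ref{con:b1})\Rightarrow(\ref{con:b3})$: one must genuinely produce a \emph{single} strictly mixing bimodule over $(M,N)$ rather than an approximating net, and verify left amenability survives the two-sided tensoring — this requires checking that $\cK{\otimes}_{\tilde M}\bar\cK$ weakly contains the identity $\tilde M$-$\tilde M$ bimodule (which follows because the cyclic vector $1\otimes_\theta\psi^{1/2}$-type vectors recover it, or simply because $\cK\succeq{}_{\tilde M}L^2(\tilde M)_{\tilde M}$ and weak containment is preserved by the tensor operations), and keeping track of which weights make each cutdown and each bimodule bounded. The cutdown step in $(\ref{con:b2})\Rightarrow(\ref{con:b1})$ also requires a lemma — not stated in the excerpt but routine — that strict mixing passes to the corner $p\tilde Mp$ under compression by a finite projection, which should follow by identifying $p\cH p$'s bounded vectors and compressing the operators $T$.
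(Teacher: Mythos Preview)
Your overall strategy---reduce to the finite corner $p\tilde Mp$ of the crossed product and invoke \cite[Theorem 3.4]{bannon-fang}---is exactly the paper's, but the paper packages it far more efficiently. Rather than explicit amplification in $(\ref{con:b1})\Rightarrow(\ref{con:b3})$ and explicit cut-down in $(\ref{con:b2})\Rightarrow(\ref{con:b1})$, the paper observes once that condition $(\ref{con:b3})$ is \emph{invariant under amenable equivalence}: if ${}_M\cH_N$ is strictly mixing and left amenable and ${}_{M'}\cL_M$, ${}_M\cL'_{M'}$ are left amenable, then ${}_{M'}\cL_M{\otimes}_M\cH_N$ is strictly mixing (Proposition~\ref{prop:mixingtensor}) and left amenable (composition of left amenable bimodules, \cite[2.13]{delaroche}). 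Since $M$ is amenably equivalent directly to the finite algebra $p\tilde Mp$ (as recalled just before the theorem), this single observation collapses both of your transfer steps and avoids passing through $\tilde M$ at all.

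One genuine slip in your $(\ref{con:b1})\Rightarrow(\ref{con:b3})$: you cannot promote the approximating net to ``a single normal completely positive map $\theta$'' with $T_\theta$ compact \emph{and} $L^2(\tilde M\otimes_\theta\tilde M)$ weakly containing the identity bimodule---a single compact $T_\theta$ can never produce weak containment of ${}_{\tilde M}L^2(\tilde M)_{\tilde M}$ unless $L^2(\tilde M)$ is finite-dimensional. What you need is the direct sum of the bimodules $L^2(\tilde M\otimes_{\theta_i}\tilde M)$ over the net, which is strictly mixing by Lemma~\ref{lem:bfdefn}. Your ``standard amplification'' from $p\tilde Mp$ to $\tilde M$ is itself another instance of amenable-equivalence invariance (via ${}_{\tilde M}L^2(\tilde M)p_{p\tilde Mp}$), so you are using the key lemma twice where the paper uses it once. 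In $(\ref{con:b3})\Rightarrow(\ref{con:b2})$ you don't need $\bar\cH$ to be strictly mixing: Proposition~\ref{prop:mixingtensor} only requires one factor to be.
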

\begin{proof}
Proposition~\ref{prop:mixingtensor} implies 
the equivalence $(\ref{con:b2})\Leftrightarrow(\ref{con:b3})$, as well as invariance 
of $(\ref{con:b3})$ under the amenable equivalence. 
Now the proof of $(\ref{con:b1})\Leftrightarrow(\ref{con:b3})$ reduces to the case 
of finite von Neumann algebras, but in which case it is done by \cite[Theorem 3.4]{bannon-fang}. 
\end{proof}
%

\section{Locally compact quantum group von Neumann algebras}

In this section, we study the relationship between the unitary representations 
of a locally compact quantum group $G$ and the bimodules of the group 
von Neumann algebra $LG$. 

For a locally compact group $G$, there are the function algebras 
$C_0(G)\subset L^\infty(G)$ and the group operator algebras 
$\mathrm{C}^*_\lambda(G)\subset LG$. 
A \emph{locally compact quantum group} $G$ also comes with 
pairs of a ultraweakly dense $\mathrm{C}^*$-subalgebra 
of a von Neumann algebra,
$C_0(G)\subset L^\infty(G)$ and $\mathrm{C}^*_\lambda(G)\subset LG$. 
The latter is often written as $C_0(\hat{G})\subset L^\infty(\hat{G})$.
See \cite{kustermans,kv} for a comprehensive treatment of theory 
of locally compact quantum groups.
We denote by $W\in M(C_0(G)\otimes \mathrm{C}^*_\lambda(G))$ 
the multiplicative unitary of $G$,
by $\varphi$ the left Haar weight (which is unique up to 
a scalar multiple) on $L^\infty(G)$, and by $\hat{\varphi}$ 
the dual weight on $LG$. 
The modular conjugations are written respectively by $J$ and $\hat{J}$. 
The map $\hat{R}\colon a\mapsto Ja^*J$ defines 
an anti-$*$-auto\-mor\-phism on $\mathrm{C}^*_\lambda(G)$ and is 
called the \emph{unitary antipode}  (\cite[5.3]{kustermans}).

The map $\lambda$ from $L^1(G) := L^\infty(G)_*$ 
to $\mathrm{C}^*_\lambda(G)$, given by 
$\lambda(\omega) = (\omega\otimes\id)(W)$, 
has a dense range and is called the \emph{left regular representation}. 
More generally, a \emph{unitary representation} of $G$ 
(or a \emph{unitary corepresentation} of $C_0(G)$) on $\cH_U$ 
is a unitary element $U\in M(C_0(G)\otimes\IK(\cH_U))$ 
which satisfies $(\Delta\otimes\id)(U)=U_{13}U_{23}$ (see \cite[Definition 3.5]{kustermans}). 
There exist the \emph{universal group $\mathrm{C}^*$-algebra} $\mathrm{C}^*_{\uni}(G)$ 
and the universal unitary representation $W_{\uni} \in M(C_0(G)\otimes\mathrm{C}^*_{\uni}(G))$, 
with $\lambda_{\uni}\colon L^1(G)\to \mathrm{C}^*_{\uni}(G)$ 
given by $\lambda_{\uni}(\omega)=(\omega\otimes\id)(W_{\uni})$ 
(see \cite{kustermans:univ}).
Thus, there is a bijective correspondence between the unitary 
representations $U$ of $G$ and the $*$-repre\-sen\-tations $\phi_U$ 
of $\mathrm{C}^*_{\uni}(G)$ on $\cH_U$, 
which is given by $\phi_U(\lambda_{\mathrm{u}}(\omega))=(\omega\otimes\id)(U)$, 
or equivalently $U = (\id\otimes\phi_U)(W_{\uni})$ 
(see \cite[Proposition 5.2]{kustermans:univ}).  
A \emph{coefficient} of $U$ is defined to be 
$f_{\omega} \in L^\infty(G)$, which is given by $f_\omega = (\id\otimes\omega)(U)$ 
for $\omega\in\IB(\cH_U)_*$. When $\omega=\omega_\eta$ is the vector functional 
associated with $\eta\in\cH_U$, we simply write $f_\eta = f_\omega$. 
The \emph{$LG$-$LG$ bimodule associated with a unitary representation $U$} 
is the bimodule $\cH:=L^2(G)\otimes\cH_U$ which is given by the tensor product
representation $W\circledT U:=W_{12}U_{13}$ of $G$ on $L^2(G)\otimes \cH$.
Namely, $\lambda(\omega) \in LG$ acts on $\cH$ from the left by 
\[
\phi_{W\circledT U}(\lambda_{\uni}(\omega))=(\omega\otimes\id\otimes\id)(W\circledT U)=U(\lambda(\omega)\otimes 1)U^*
\]
and from the right by $\hat{J}\lambda(\omega)^*\hat{J}\otimes 1$. 
Let $V$ be another unitary representation of $G$. 
Then, $U$ is said to be \emph{weakly contained} in $V$ 
(denoted by $U\preceq V$) 
if $\phi_U$ is weakly contained in $\phi_V$, i.e., 
if the identity map extends to a continuous $*$-homo\-mor\-phism $\sigma$ 
from $\phi_V(\mathrm{C}^*_{\uni}(G))$ to $\phi_U(\mathrm{C}^*_{\uni}(G))$,
which will satisfy $(\id\otimes\sigma)(V)=U$. 

\begin{prop}\label{prop:weakcontainment}
Let $U$ and $V$ be unitary representations of $G$ such that $U\preceq V$. 
Then one has ${}_{LG}(L^2(G)\otimes\cH_U)_{LG}\preceq{}_{LG}(L^2(G)\otimes\cH_V)_{LG}$.
\end{prop}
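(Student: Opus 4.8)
The plan is to lift the weak containment $U\preceq V$ first to the tensor product representations $W\circledT U$ and $W\circledT V$, and then to transport it to the associated bimodules $\cH:=L^2(G)\otimes\cH_U$ and $\cK:=L^2(G)\otimes\cH_V$; the key point is that the left $LG$-action on $\cH$ is implemented by $\phi_{W\circledT U}$ while the right $LG^\opp$-action is the same representation-independent action $\lambda(\nu)^\opp\mapsto\hat J\lambda(\nu)^*\hat J\otimes 1$ for every $U$. First I would fix a surjective $*$-homomorphism $\sigma\colon\overline{\phi_V(\mathrm{C}^*_\uni(G))}\to\overline{\phi_U(\mathrm{C}^*_\uni(G))}$ with $\sigma\circ\phi_V=\phi_U$, which exists since $U\preceq V$; extending it to multiplier algebras gives $(\id\otimes\sigma)(V)=U$. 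Because the third leg of $W$ is trivial, applying $\sigma$ to the third leg of $W\circledT V=W_{12}V_{13}$ yields $(\id\otimes\id\otimes\sigma)(W\circledT V)=W_{12}U_{13}=W\circledT U$, and slicing the first leg gives $\phi_{W\circledT U}=(\id\otimes\sigma)\circ\phi_{W\circledT V}$, so in particular $W\circledT U\preceq W\circledT V$ as unitary representations of $G$. Since $U\preceq V$ is equivalent to $\ker\phi_V\subseteq\ker\phi_U$, the central support of $\phi_U$ in $\mathrm{C}^*_\uni(G)^{**}$ is dominated by that of $\phi_V$, so $\sigma$ extends further to a unital normal $*$-homomorphism $\tilde\sigma\colon\phi_V(\mathrm{C}^*_\uni(G))''\to\phi_U(\mathrm{C}^*_\uni(G))''$ with $\tilde\sigma\circ\phi_V=\phi_U$ and still $(\id\otimes\tilde\sigma)(V)=U$.

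Next I would transport this to the bimodule level. By the formula $\phi_{W\circledT U}(\lambda_\uni(\omega))=U(\lambda(\omega)\otimes 1)U^*$, the left $LG$-action on $\cH$ takes values in $\IB(L^2(G))\otimes\phi_U(\mathrm{C}^*_\uni(G))''$, and likewise for $V$, whereas the right $LG^\opp$-action takes values in $\IB(L^2(G))\otimes\IC$ in both cases. Hence the normal $*$-homomorphism $\Sigma:=\id_{\IB(L^2(G))}\otimes\tilde\sigma$ is defined on $\pi_\cK(LG\odot LG^\opp)$, and I would check — first on $\lambda(L^1(G))\odot\lambda(L^1(G))^\opp$, using $\Sigma(V)=(\id\otimes\tilde\sigma)(V)=U$ and $\tilde\sigma(1)=1$, and then on all of $LG\odot LG^\opp$ by normality of $\Sigma$ and of the two actions — that $\Sigma\circ\pi_\cK=\pi_\cH$. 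Being a $*$-homomorphism, $\Sigma$ is contractive, so $\|\pi_\cH(z)\|\le\|\pi_\cK(z)\|$ for every $z\in LG\odot LG^\opp$; equivalently the identity on $LG\odot LG^\opp$ extends to a $*$-homomorphism $\mathrm{C}^*(\pi_\cK(LG\odot LG^\opp))\to\mathrm{C}^*(\pi_\cH(LG\odot LG^\opp))$, which is exactly ${}_{LG}\cH_{LG}\preceq{}_{LG}\cK_{LG}$.

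The step I expect to be the main obstacle is the passage from the purely $\mathrm{C}^*$-algebraic relation $U\preceq V$ to something that applies to the von Neumann algebra $LG$ and its commutant acting on $L^2(G)\otimes\cH_U$: the homomorphism $\sigma$ lives a priori only on $\mathrm{C}^*_\uni(G)$, whereas the bimodule structure is built from the normal (ultraweakly continuous) extension of $\phi_{W\circledT U}\circ\lambda_\uni$ to all of $LG$. This is what forces the upgrade of $\sigma$ to the normal $\tilde\sigma$ via central supports in $\mathrm{C}^*_\uni(G)^{**}$; granted that, the remaining ingredients — the leg-numbering identity $(\id\otimes\sigma)(V)=U$ and the reduction of the intertwining verification to the ultraweakly dense subalgebra $\lambda(L^1(G))\odot\lambda(L^1(G))^\opp$ — are routine.
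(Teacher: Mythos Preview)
Your proposal has a genuine gap at precisely the point you flagged as the main obstacle. The claim that weak containment $U\preceq V$ (equivalently $\ker\phi_V\subseteq\ker\phi_U$ as norm-closed ideals in $\mathrm{C}^*_\uni(G)$) forces the central covers in $\mathrm{C}^*_\uni(G)^{**}$ to satisfy $z_{\phi_U}\le z_{\phi_V}$ is false in general. Inclusion of C*-kernels is strictly weaker than inclusion of the corresponding weak*-closed ideals in the bidual; the latter is a quasi-containment condition, not weak containment. A commutative counterexample already shows this: take $A=C[0,1]$, let $\phi_V$ be multiplication on $L^2([0,1],\mathrm{Lebesgue})$ and $\phi_U$ evaluation at $0$. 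Then $\ker\phi_V=0\subseteq\ker\phi_U$, but there is no normal $*$-homomorphism $L^\infty[0,1]\to\IC$ restricting to $f\mapsto f(0)$ on $C[0,1]$; equivalently $z_{\phi_U}$ and $z_{\phi_V}$ are orthogonal in $C[0,1]^{**}$. Consequently your normal extension $\tilde\sigma$ need not exist, and without it $\Sigma=\id\otimes\tilde\sigma$ is undefined on $\pi_{\cK}(LG\odot LG^{\opp})$, since the left $LG$-action $a\mapsto V(a\otimes1)V^*$ really uses normality to pass from $\lambda(L^1(G))$ to all of $LG$.

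The paper circumvents this by never asking for a homomorphism between the bicommutants. Instead it works at the level of vector states and coefficients: using Lemma~\ref{lem:unitaryambi}, for a bounded $\xi\in L^2(G)$ and $\eta\in\cH_U$ the matrix coefficient $\ip{a(\xi\otimes\eta),(\xi\otimes\eta)x^*}$ equals $\ip{T_{\hat\varphi}(\xi)f_\eta\, a\hat\varphi^{1/2},\hat\varphi^{1/2}x^*}$, where $f_\eta=(\id\otimes\omega_\eta)(U)\in L^\infty(G)$. Weak containment $U\preceq V$ gives states $\omega_i=\sum_j\omega_{\eta_{i,j}}$ on $\IB(\cH_V)$ with $\omega_i\circ\phi_V\to\omega_\eta\circ\phi_U$ on $\mathrm{C}^*_\uni(G)$, which forces the coefficients $f_{\omega_i}\to f_\eta$ ultraweakly in $L^\infty(G)$. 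Substituting back yields the approximation of $\ip{a\zeta,\zeta x^*}$ by sums $\sum_j\ip{a\zeta_{i,j},\zeta_{i,j}x^*}$ with $\zeta_{i,j}\in L^2(G)\otimes\cH_V$, first for $a,x\in\fn_{\hat\varphi}$ and then, by a uniform bound, for all $a,x\in LG$. This is exactly the state-approximation formulation of weak containment of bimodules, and it uses only the C*-level information that $U\preceq V$ provides.
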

The following lemma will be used in the proof and later. The operator $T_{\hat{\varphi},\hat{\varphi}}(\xi)$ 
appearing in Lemma~\ref{lem:l2op} is simply written as $T_{\hat{\varphi}}(\xi)$.
\begin{lem}\label{lem:unitaryambi}
Let $\zeta=\xi\otimes\eta\in {}_{LG}(L^2(G)\otimes\cH_U)_{LG}$ be a simple tensor 
such that $\xi$ is $(\hat{\varphi},\hat{\varphi})$-bounded.
Then, $\zeta$ is $(\hat{\varphi},\hat{\varphi})$-bounded 
and satisfies $T_{\hat{\varphi}}(\zeta) = T_{\hat{\varphi}}(\xi) f_\eta$.
\end{lem}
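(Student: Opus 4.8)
The plan is to compute the completely positive map that Lemma~\ref{lem:l2op} attaches to $\zeta$ and to recognise it as $\theta_\xi$ composed with the left multiplier of $LG$ determined by the coefficient $f_\eta$. Left $\hat{\varphi}$-boundedness of $\zeta=\xi\otimes\eta$ is immediate: the right $LG$-action on $\cH=L^2(G)\otimes\cH_U$ is $\hat J(\cdot)^*\hat J\otimes 1$ and touches only the first leg, so $\zeta x=(\xi x)\otimes\eta$ for $x\in\fn_{\hat{\varphi}}^*$ and hence $L_{\hat{\varphi}}(\zeta)=\iota_\eta\circ L_{\hat{\varphi}}(\xi)$ is bounded, where $\iota_\eta\colon L^2(G)\to\cH$, $w\mapsto w\otimes\eta$. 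Thus $\theta_\zeta$ is already defined, and for $a\in LG$, denoting by $\pi_{\cH}(a)$ its left action on $\cH$,
\[
\theta_\zeta(a)=L_{\hat{\varphi}}(\zeta)^*\pi_{\cH}(a)L_{\hat{\varphi}}(\zeta)=L_{\hat{\varphi}}(\xi)^*\,\Phi_\eta(a)\,L_{\hat{\varphi}}(\xi)=\theta_\xi\bigl(\Phi_\eta(a)\bigr),
\]
where $\Phi_\eta(a):=\iota_\eta^*\pi_{\cH}(a)\iota_\eta=(\id\otimes\omega_\eta)(\pi_{\cH}(a))$. Since $\pi_{\cH}(LG)$ commutes with the right action $\hat J LG\hat J\otimes 1$, it is contained in $LG\otimes\IB(\cH_U)$, so $\Phi_\eta$ is a well-defined normal completely positive map $LG\to LG$ with $\Phi_\eta(1)=\|\eta\|^2 1$; in other words $\theta_\zeta=\theta_\xi\circ\Phi_\eta$.

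The crucial step is to identify $\Phi_\eta$ and to record how it acts on $L^2(G)$. Using $\pi_{\cH}(\lambda(\omega))=(\omega\otimes\id\otimes\id)(W_{12}U_{13})$ and slicing the $\cH_U$-leg with $\omega_\eta$ — which leaves $W_{12}$ untouched and turns $U_{13}$ into $f_\eta=(\id\otimes\omega_\eta)(U)\in L^\infty(G)$ on the $C_0(G)$-leg — one gets
\[
\Phi_\eta(\lambda(\omega))=(\omega\otimes\id)\bigl(W(f_\eta\otimes 1)\bigr)=\lambda(\omega_{f_\eta}),\qquad\omega_{f_\eta}(x):=\omega(xf_\eta),
\]
so $\Phi_\eta$ is the normal extension of the left multiplier $\lambda(\omega)\mapsto\lambda(\omega_{f_\eta})$ of $LG$. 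Two facts about this multiplier are needed. First, $\Phi_\eta(a)\hat{\varphi}^{1/2}=f_\eta\,(a\hat{\varphi}^{1/2})$ for all $a\in\fn_{\hat{\varphi}}$: on the Tomita subspace $\cI\subseteq L^1(G)$ the vector $\lambda(\omega)\hat{\varphi}^{1/2}$ is the canonical implementation of $\omega$ against the left Haar weight $\varphi$ (\cite{kv}), and since $f_\eta^*$ acts on $L^2(G)$ as the left $L^\infty(G)$-multiplier $x\varphi^{1/2}\mapsto(f_\eta^*x)\varphi^{1/2}$, a direct computation shows that $\cI$ is stable under $\omega\mapsto\omega_{f_\eta}$ and that $\lambda(\omega_{f_\eta})\hat{\varphi}^{1/2}=f_\eta\,\lambda(\omega)\hat{\varphi}^{1/2}$; the general case then follows from normality of $\Phi_\eta$ and closedness of the GNS map of $\hat{\varphi}$. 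Second, $\hat{\varphi}\circ\Phi_\eta\le\|\eta\|^2\hat{\varphi}$: this expresses that the completely positive multiplier associated with a coefficient of a unitary representation is dominated, on the dual Haar weight, by a multiple of $\hat{\varphi}$ — the left regular representation being the model case — and is checked directly (cf.\ \cite{dfsw}).

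Granting these, the conclusion is formal. From $\theta_\zeta=\theta_\xi\circ\Phi_\eta$, the estimate $\hat{\varphi}\circ\theta_\xi\le\|R_{\hat{\varphi}}(\xi)\|^2\hat{\varphi}$ of Lemma~\ref{lem:l2op}, and $\hat{\varphi}\circ\Phi_\eta\le\|\eta\|^2\hat{\varphi}$, we obtain, for $a\in\fn_{\hat{\varphi}}$,
\[
\|a\zeta\|^2=\hat{\varphi}\bigl(\theta_\zeta(a^*a)\bigr)=\hat{\varphi}\bigl(\theta_\xi(\Phi_\eta(a^*a))\bigr)\le\|R_{\hat{\varphi}}(\xi)\|^2\|\eta\|^2\,\hat{\varphi}(a^*a)=\bigl(\|R_{\hat{\varphi}}(\xi)\|\,\|\eta\|\bigr)^2\|a\hat{\varphi}^{1/2}\|^2,
\]
so $\zeta$ is right $\hat{\varphi}$-bounded, hence $(\hat{\varphi},\hat{\varphi})$-bounded. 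Then, for $a\in\fn_{\hat{\varphi}}$,
\[
T_{\hat{\varphi}}(\zeta)\,a\hat{\varphi}^{1/2}=\theta_\zeta(a)\hat{\varphi}^{1/2}=\theta_\xi\bigl(\Phi_\eta(a)\bigr)\hat{\varphi}^{1/2}=T_{\hat{\varphi}}(\xi)\bigl(\Phi_\eta(a)\hat{\varphi}^{1/2}\bigr)=T_{\hat{\varphi}}(\xi)f_\eta\,a\hat{\varphi}^{1/2},
\]
and since the vectors $a\hat{\varphi}^{1/2}$, $a\in\fn_{\hat{\varphi}}$, are dense in $L^2(G)$ and both sides are bounded operators, $T_{\hat{\varphi}}(\zeta)=T_{\hat{\varphi}}(\xi)f_\eta$. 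I expect the second paragraph to be the main obstacle — disentangling the leg structure of $W\circledT U$ and, above all, proving that the multiplier $\Phi_\eta$ is compatible with the semifinite weight $\hat{\varphi}$, i.e.\ the identity $\Phi_\eta(a)\hat{\varphi}^{1/2}=f_\eta a\hat{\varphi}^{1/2}$ on $\fn_{\hat{\varphi}}$ and the domination $\hat{\varphi}\circ\Phi_\eta\le\|\eta\|^2\hat{\varphi}$; once these are available, the remainder is routine manipulation with the identities of Lemma~\ref{lem:l2op}.
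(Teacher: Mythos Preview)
Your approach and the paper's are essentially the same: both rest on the identity $\Phi_\eta(\lambda(\omega))=\lambda(f_\eta\omega)$ (your $\omega_{f_\eta}$) together with the left $L^\infty(G)$-module structure of the Tomita ideal $\cI$, which gives $\lambda(f_\eta\omega)\hat\varphi^{1/2}=f_\eta\,\lambda(\omega)\hat\varphi^{1/2}$. The paper simply packages this as a single inner-product chain
\[
\ip{\lambda(\omega)\zeta,\zeta x^*}
=\ip{\lambda(f_\eta\omega)\xi,\xi x^*}
=\ip{T_{\hat\varphi}(\xi)f_\eta\,\lambda(\omega)\hat\varphi^{1/2},\hat\varphi^{1/2}x^*}
\]
for $\omega\in\cI$ and $x\in\fn_{\hat\varphi}$, and then invokes density of $\lambda(\cI)$ in $\fn_{\hat\varphi}$. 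It never names the intermediate map $\Phi_\eta$ nor the factorisation $\theta_\zeta=\theta_\xi\circ\Phi_\eta$; those are your additions.

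The one place the two write-ups genuinely differ is right $\hat\varphi$-boundedness. You reduce it to the weight domination $\hat\varphi\circ\Phi_\eta\le\|\eta\|^2\hat\varphi$ and correctly flag this as the only nontrivial step --- but you do not prove it, only cite \cite{dfsw}. The paper is equally terse here: its displayed chain establishes the formula $\ip{a\zeta,\zeta x^*}=\ip{T_{\hat\varphi}(\xi)f_\eta\,a\hat\varphi^{1/2},\hat\varphi^{1/2}x^*}$, which is exactly what the applications (Propositions~\ref{prop:weakcontainment} and~\ref{prop:mixingreptobimod}) use, but does not separately spell out why $a\mapsto a\zeta$ is bounded by $\|a\hat\varphi^{1/2}\|$. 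So your version is more explicit about where the work lies; the paper's is shorter. Neither fully writes out the right-boundedness argument you isolate, and if you want a self-contained proof you should supply that inequality rather than defer it.
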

\begin{proof}
We recall here the definition of the dual weight $\hat{\varphi}$. 
Let $\cI$ be the collection of $\omega \in L^1(G)$
such that there is $C>0$ satisfying
$|\omega(x^*)|\leq C \|x\varphi^{1/2}\|$ for $x\in\fn_\varphi$.
By Riesz representation theorem,
there is $\xi(\omega)\in L^2(G)$
such that $\omega(x^*)=\langle \xi(\omega),x\varphi^{1/2}\rangle$
for $x\in\fn_\varphi$.
Note that $\cI$ is a left $L^\infty(G)$-module and one has 
$\xi(a\omega)=a\xi(\omega)$ for every $a\in L^\infty(G)$ and $\omega\in \cI$.
The dual weight $\hat{\varphi}$ is defined 
in such a way that every $\omega\in\cI$ satisfies 
$\lambda(\omega)\in\fn_{\hat{\varphi}}$ and 
$\lambda(\omega)\hat{\varphi}^{1/2}=\xi(\omega)$ (\cite[Proposition 5.22]{kustermans}).
It follows that, for every $x\in\fn_{\hat{\varphi}}$, one has 
\begin{align*}
\ip{\lambda(\omega)\zeta,\zeta x^*}
 &= \ip{(\omega\otimes\id\otimes\id)(W_{12}U_{13})(\xi\otimes\eta),\xi x^* \otimes \eta}\\
 &= \ip{(\omega\otimes\id)(W(f_\eta\otimes1))\xi,\xi x^*}\\
 &= \ip{\lambda(f_\eta\omega) \xi,\xi x^*} 
 = \ip{R_{\hat{\varphi}}(\xi)\xi(f_\eta\omega),L_{\hat{\varphi}}(\xi) \hat{\varphi}^{1/2}x^*}\\
 &= \ip{T_{\hat{\varphi}}(\xi) f_\eta\lambda(\omega)\hat{\varphi}^{1/2}, \hat{\varphi}^{1/2}x^*}.
\end{align*}
Since $\lambda(\cI)$ is dense in $\fn_{\hat{\varphi}}$, this proves the lemma.
\end{proof}
\begin{proof}[Proof of Proposition \ref{prop:weakcontainment}]
Let a simple tensor $\zeta=\xi\otimes\eta \in L^2(G)\otimes\cH_U$ 
such that $\xi$ is $(\hat{\varphi},\hat{\varphi})$-bounded and $\|\xi\|=\|\eta\|=1$ 
be given. 
Since $\phi_U$ is weakly contained in $\phi_V$, there is a net $(\omega_i)_i$
of normal states on $\IB(\cH_V)$ such that $\omega_i\circ\phi_V\to\omega_\eta\circ\phi_U$ 
pointwise on $\mathrm{C}^*_{\uni}(G)$. 
Then, one has $f_{\omega_i}\to f_\omega$ ultraweakly, since 
for every $\xi,\eta\in L^2(G)$, one has
\[
\ip{ f_\omega \xi,\eta } = (\omega_{\xi,\eta}\otimes(\omega\circ\phi_U))(W_{\uni}) 
 = \lim_i (\omega_{\xi,\eta}\otimes(\omega_i\circ\phi_V))(W_{\uni}) 
 = \lim_i \ip{ f_{\omega_i} \xi,\eta }.
\]
Here note that $ (\omega_{\xi,\eta}\otimes\id)(W_{\uni}) \in \mathrm{C}^*_{\uni}(G)$.
We may assume that $\omega_i = \sum_{j=1}^{n(i)} \omega_{\eta_{i,j}}$ 
for some $\eta_{i,j}\in\cH_V$. 
Let $\zeta_{i,j}=\xi\otimes\eta_{i,j}\in L^2(G)\otimes\cH_V$. 
Then, by Lemmas~\ref{lem:l2op} and \ref{lem:unitaryambi}, 
for every $a,x\in\fn_{\hat{\varphi}}$, one has 
\begin{align*}
\ip{ a\zeta,\zeta x^*}
 &= \ip{T_{\hat{\varphi}}(\zeta) a\hat{\varphi}^{1/2}, \hat{\varphi}^{1/2}x^*}
   = \ip{T_{\hat{\varphi}}(\xi)f_\eta a\hat{\varphi}^{1/2}, \hat{\varphi}^{1/2}x^*}\\
 &= \textstyle \lim_i \sum_{j=1}^{n(i)} \ip{T_{\hat{\varphi}}(\xi)f_{\eta_{i,j}} a\hat{\varphi}^{1/2}, \hat{\varphi}^{1/2}x^*}\\
 &= \textstyle \lim_i \sum_{j=1}^{n(i)}\ip{ a\zeta_{i,j},\zeta_{i,j} x^*}.
\end{align*}
Since $\max\{|\ip{a\zeta,\zeta x^*}|,\, |\sum_{j=1}^{n(i)}\ip{ a\zeta_{i,j},\zeta_{i,j} x^*}|\}
 \le \|T_{\hat{\varphi}}(\xi)\|\|a\hat{\varphi}^{1/2}\| \| \hat{\varphi}^{1/2}x^*\|$,
the above equality in fact holds for all $a,x\in LG$. 
This means that $\omega_\zeta\circ\pi_{L^2(G)\otimes\cH_U}$ is continuous 
on $\pi_{L^2(G)\otimes\cH_V}(LG\odot LG^{\opp})$.
Since such states $\omega_\zeta$ form a cyclic family, we conclude that 
$L^2(G)\otimes\cH_U \preceq L^2(G)\otimes\cH_V$.
\end{proof}

We will prove the partial converse to Proposition \ref{prop:weakcontainment}. 
For this, we have to consider the comultiplication 
$\hat{\Delta}_{\max}\colon \mathrm{C}^*_{\uni}(G) 
 \to \mathrm{C}^*_{\uni}(G)\otimes_{\max}\mathrm{C}^*_{\uni}(G)$ 
with respect to the maximal tensor product. 
Let $\pi_i\colon \mathrm{C}^*_{\uni}(G)
 \to M(\mathrm{C}^*_{\uni}(G)\otimes_{\max}\mathrm{C}^*_{\uni}(G))$ 
be the embeddings given by $\pi_1(a)=a\otimes 1$ and $\pi_2(a)=1\otimes a$. 
Then, we consider the unitary representation 
\[
X:=(\id\otimes\pi_2)(W_{\uni})(\id\otimes\pi_1)(W_{\uni})
 \in M(C_0(G) \otimes (\mathrm{C}^*_{\uni}(G)\otimes_{\max}\mathrm{C}^*_{\uni}(G))).
\]
Since the second variables of $(\id\otimes\pi_1)(W_{\uni})$ and $(\id\otimes\pi_2)(W_{\uni})$ 
commute, $X$ is indeed a unitary representation. 
We put $\hat{\Delta}_{\max}:=\phi_X$. 
Namely, $\hat{\Delta}_{\max}$ is the $*$-homo\-mor\-phism 
that satisfies $(\id\otimes\hat{\Delta}_{\max})(W_{\uni})=X$. 
The coassociativity of $\hat{\Delta}_{\max}$ follows from 
\begin{align*}
(\id\otimes(\hat{\Delta}_{\max}\otimes\id))(X)
 &=(\id\otimes(\hat{\Delta}_{\max}\otimes\id))((\id\otimes\pi_2)(W_{\uni})\cdot(\id\otimes\pi_1)(W_{\uni}))\\
 &=(\id\otimes\pi'_3)(W_{\uni})\cdot(\id\otimes\pi'_2)(W_{\uni})(\id\otimes\pi'_1)(W_{\uni})\\
 &=(\id\otimes(\id\otimes\hat{\Delta}_{\max}))(X).
\end{align*}
Here $\pi_i'\colon \mathrm{C}^*_{\uni}(G) \to M(\mathrm{C}^*_{\uni}(G)\otimes_{\max}\mathrm{C}^*_{\uni}(G)\otimes_{\max}\mathrm{C}^*_{\uni}(G))$ denote the obvious embeddings. 
Moreover, 
for the quotient map 
$q\colon \mathrm{C}^*_{\uni}(G)\otimes_{\max}\mathrm{C}^*_{\uni}(G) 
 \to \mathrm{C}^*_{\uni}(G)\otimes\mathrm{C}^*_{\uni}(G)$,
the map $q\circ \hat{\Delta}_{\max}$ is equal to the usual 
comultiplication $\hat{\Delta}_{\uni}$ on $\mathrm{C}^*_{\uni}(G)$ (see \cite[p.311]{kustermans}). 

Let $\cH$ be an $LG$-$LG$ bimodule with the $*$-repre\-sen\-tation 
$\pi_{\cH}\colon LG\odot LG^{\opp} \to \IB(\cH)$. We define 
the \emph{unitary representation $U_{\cH}$ associated with} $\cH$ to be 
the one given by
\[
\phi_{U_{\cH}}=\pi_{\cH}\circ(\lambda\otimes\lambda^{\opp})\circ\hat{\Delta}_{\max}
 \colon \mathrm{C}^*_{\uni}(G)\to\IB(\cH).
\]
Here we identify the 
$*$-homo\-mor\-phisms from $\mathrm{C}^*_{\uni}(G)$ with the corresponding 
representations from $L^1(G)$, and define 
$\lambda^{\opp}\colon \mathrm{C}^*_{\uni}(G)\to \mathrm{C}^*_\lambda(G)^{\opp}$ by 
$\lambda^{\opp}(\omega) := \hat{R}(\lambda(\omega))^{\opp}$, where 
$\hat{R}$ is the unitary antipode. Let us define 
$\pi_{\cH}^{(i)}\colon \mathrm{C}^*_{\uni}(G) \to \IB(\cH)$ by 
$\pi_{\cH}^{(1)}(\lambda_{\uni}(\omega))=\pi_{\cH}(\lambda(\omega)\otimes1)$ and 
$\pi_{\cH}^{(2)}(\lambda_{\uni}(\omega))=\pi_{\cH}(1\otimes\lambda^{\opp}(\omega))$.
Then, it follows from the definition that 
\[
U_{\cH}=(\id\otimes\pi_{\cH}^{(2)})(W_{\uni})(\id\otimes\pi_{\cH}^{(1)})(W_{\uni})
 \in M(C_0(G)\otimes \mathrm{C}^*(\pi_{\cH}(\mathrm{C}^*_\lambda(G)\odot \mathrm{C}^*_\lambda(G)^{\opp}))).
\]

\begin{prop}\label{prop:weakcontainment2}
If $\cH$ and $\cK$ are $LG$-$LG$ bimodules such that $\cH\preceq\cK$, then $U_{\cH}\preceq U_{\cK}$.
\end{prop}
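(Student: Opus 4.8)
The plan is to exploit the explicit formula $U_{\cH}=(\id\otimes\pi_{\cH}^{(2)})(W_{\uni})(\id\otimes\pi_{\cH}^{(1)})(W_{\uni})$ and to track how a weak containment at the level of bimodules, i.e.\ of the representations $\pi_{\cH},\pi_{\cK}$ of $LG\odot LG^{\opp}$, passes to the representations $\phi_{U_{\cH}},\phi_{U_{\cK}}$ of $\mathrm{C}^*_{\uni}(G)$. First I would recall that $\cH\preceq\cK$ means precisely that the identity map on $LG\odot LG^{\opp}$ extends to a $*$-homo\-mor\-phism $\sigma\colon\mathrm{C}^*(\pi_{\cK}(LG\odot LG^{\opp}))\to\mathrm{C}^*(\pi_{\cH}(LG\odot LG^{\opp}))$, necessarily $\mathrm{C}^*$-norm-contractive and satisfying $\sigma\circ\pi_{\cK}=\pi_{\cH}$ on $LG\odot LG^{\opp}$ (hence, passing to the unitizations/multiplier algebras, $\sigma\circ\pi_{\cK}^{(i)}=\pi_{\cH}^{(i)}$ for $i=1,2$). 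Applying $\id\otimes\sigma$ to the multiplicative-unitary formula for $U_{\cK}$ gives $(\id\otimes\sigma)(U_{\cK})=U_{\cH}$, using that $\id\otimes\sigma$ is multiplicative and extends to the relevant multiplier algebras. Then I would invoke the definition of weak containment of unitary representations of $G$ given just before Proposition~\ref{prop:weakcontainment}: $U_{\cH}\preceq U_{\cK}$ exactly when the identity map on $\mathrm{C}^*_{\uni}(G)$ descends to a continuous $*$-homo\-mor\-phism $\tau$ from $\phi_{U_{\cK}}(\mathrm{C}^*_{\uni}(G))$ to $\phi_{U_{\cH}}(\mathrm{C}^*_{\uni}(G))$ with $(\id\otimes\tau)(U_{\cK})=U_{\cH}$, and the natural candidate is $\tau=\sigma|_{\phi_{U_{\cK}}(\mathrm{C}^*_{\uni}(G))}$.

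The key steps, in order, are: (i) unpack $\cH\preceq\cK$ to produce $\sigma$ and record $\sigma\circ\pi_{\cK}^{(i)}=\pi_{\cH}^{(i)}$; (ii) check that $\sigma$ maps the range $\phi_{U_{\cK}}(\mathrm{C}^*_{\uni}(G))=\pi_{\cK}((\lambda\otimes\lambda^{\opp})(\hat{\Delta}_{\max}(\mathrm{C}^*_{\uni}(G))))$ into $\phi_{U_{\cH}}(\mathrm{C}^*_{\uni}(G))$ and that $\sigma\circ\phi_{U_{\cK}}=\phi_{U_{\cH}}$; this is immediate once one has (i) together with the definition $\phi_{U_{\cH}}=\pi_{\cH}\circ(\lambda\otimes\lambda^{\opp})\circ\hat{\Delta}_{\max}$, since $\sigma\circ\pi_{\cK}=\pi_{\cH}$ on the relevant subalgebra; (iii) conclude $U_{\cH}\preceq U_{\cK}$ directly from the definition, i.e.\ $\phi_{U_{\cH}}$ is weakly contained in $\phi_{U_{\cK}}$ because the identity on $\mathrm{C}^*_{\uni}(G)$ extends (via $\sigma$) to a $*$-homo\-mor\-phism of the images. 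Essentially the statement $U_{\cH}\preceq U_{\cK}$ is, by the definition of weak containment of representations of the quantum group (expressed through $\phi_{U}$), nothing but $\phi_{U_{\cH}}\preceq\phi_{U_{\cK}}$ as $*$-repre\-sen\-tations of $\mathrm{C}^*_{\uni}(G)$, and this is an immediate consequence of $\phi_{U_\bullet}=\pi_\bullet\circ(\lambda\otimes\lambda^{\opp})\circ\hat{\Delta}_{\max}$ together with $\pi_{\cH}\preceq\pi_{\cK}$.

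The one place that needs a little care — and what I expect to be the main (mild) obstacle — is the passage from the $\mathrm{C}^*$-algebraic statement $\sigma\circ\pi_{\cK}=\pi_{\cH}$ on $LG\odot LG^{\opp}$ to the multiplier-algebra identities $(\id\otimes\sigma)(U_{\cK})=U_{\cH}$ and $\sigma\circ\pi_{\cK}^{(i)}=\pi_{\cH}^{(i)}$: one must argue that $\sigma$ extends to strictly continuous unital $*$-homo\-mor\-phisms on the multiplier algebras of the $\mathrm{C}^*$-algebras generated by $\pi_{\cK}(\mathrm{C}^*_\lambda(G)\odot\mathrm{C}^*_\lambda(G)^{\opp})$ (here the non-degeneracy convention on representations, stated in the Conventions, is what makes this work), and that applying $\id\otimes\sigma$ to the defining product $U_{\cK}=(\id\otimes\pi_{\cK}^{(2)})(W_{\uni})(\id\otimes\pi_{\cK}^{(1)})(W_{\uni})$ commutes with the product and the slices. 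Alternatively, and perhaps most cleanly, I would avoid multipliers altogether by testing against coefficients: for $\omega\in L^1(G)$ one has $\phi_{U_{\cK}}(\lambda_{\uni}(\omega))=(\omega\otimes\id)(U_{\cK})$, and $\sigma\bigl((\omega\otimes\id)(U_{\cK})\bigr)=(\omega\otimes\id)(U_{\cH})=\phi_{U_{\cH}}(\lambda_{\uni}(\omega))$ follows by linearity and continuity from $\sigma\circ\pi_{\cK}^{(i)}=\pi_{\cH}^{(i)}$ on the (dense-range) $\lambda_{\uni}(L^1(G))$; since $\lambda_{\uni}(L^1(G))$ is dense in $\mathrm{C}^*_{\uni}(G)$ and $\sigma$ is contractive, this gives $\sigma\circ\phi_{U_{\cK}}=\phi_{U_{\cH}}$ on all of $\mathrm{C}^*_{\uni}(G)$, which is exactly $U_{\cH}\preceq U_{\cK}$.
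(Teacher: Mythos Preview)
Your proposal is correct and is precisely the argument the paper has in mind: the paper's entire proof reads ``This is obvious from the definition,'' and what you have written is a careful unpacking of that obviousness via the factorization $\phi_{U_{\bullet}}=\pi_{\bullet}\circ(\lambda\otimes\lambda^{\opp})\circ\hat{\Delta}_{\max}$. Your discussion of the multiplier-algebra subtlety and the cleaner alternative of testing on $\lambda_{\uni}(L^1(G))$ is accurate but more than the paper deems necessary to spell out.
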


\begin{proof}
This is obvious from the definition.
\end{proof}

The \emph{conjugation representation} $V_{\mathrm{c}}:=U_{L^2(G)}$ 
is the one that is associated with the identity bimodule of $LG$ and is given by 
$V_{\mathrm{c}}=(1\otimes K)W(1\otimes K)^*W$, where $K=\hat{J}J$. 
Indeed, one has $(\id\otimes\pi_{L^2(G)}^{(1)})(W_{\uni})=W$ and 
$(\id\otimes\pi_{L^2(G)}^{(2)})(W_{\uni})=(1\otimes K)W(1\otimes K)^*$, since
\[
(\omega\otimes\pi_{L^2(G)}^{(2)})(W_{\uni}) = \hat{J}\hat{R}(\lambda(\omega))^*\hat{J} 
 = K\lambda(\omega)K^*=(\omega\otimes\id)((1\otimes K)W(1\otimes K)^*)
\]
for every $\omega\in L^1(G)$. 
We say a locally compact quantum group $G$ is \emph{strongly inner amenable} 
(in the locally compact setting, see \cite{lp}) if the trivial representation $1$ 
is weakly contained in $V_{\mathrm{c}}$. 
This property is formally stronger than the \emph{inner amenability} as introduced in \cite{gni}. 
All inner amenable locally compact groups, strongly amenable locally compact 
quantum groups, and unimodular discrete quantum 
groups are inner amenable. Since it is irrelevant to the present work, we omit the rather 
routine proofs of ``strong amenability $\Rightarrow$ strong inner 
amenability $\Rightarrow$ inner amenability.''

Proposition~\ref{prop:weakcontainment} implies the well-known fact (\cite{bct}) that 
if $G$ is strongly amenable (i.e., $1\preceq W$), then $LG$ is amenable (i.e., 
${}_{LG}L^2(G)_{LG}\preceq {}_{LG}(L^2(G)\otimes L^2(G))_{LG}$). 
Conversely, if $LG$ is amenable, then $V_{\mathrm{c}}\preceq W$ 
by Proposition~\ref{prop:weakcontainment2}.
Hence if $G$ is moreover strongly inner amenable, then $G$ is strongly 
amenable (cf.\ \cite{lp}). It would be interesting to know whether 
every discrete quantum group is strongly inner amenable (cf.\ \cite{tomatsu}), 
and whether the property $V_{\mathrm{c}}\preceq W$ is equivalent 
to amenability of $LG$.

\begin{prop}\label{prop:reciprocity} The following hold.
\begin{enumerate}[(1)]
\item
Let $U$ be a unitary representation of $G$ on $\cH$ and 
$\cH=L^2(G)\otimes\cH_U$ be the associated $LG$-$LG$ bimodule. 
Then, the unitary representation $U_{\cH}$ associated with $\cH$ is equal to 
$V_{\mathrm{c}}\circledT U$.
In particular, if $G$ is strongly inner amenable, then $U\preceq U_{\cH}$. 
\item 
Let $\cH$ be an $LG$-$LG$ bimodule and $U_{\cH}$ be 
the associated unitary representation of $G$ on $\cH$. 
Then, the $LG$-$LG$ bimodule $L^2(G)\otimes\cH$ associated with $U_{\cH}$ 
is unitarily equivalent to ${}_{\hat{\Delta}(LG)}(\cH \otimes L^2(G))_{\hat{\Delta}(LG)}$. 
\end{enumerate}
\end{prop}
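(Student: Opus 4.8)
The plan is to verify both assertions by direct manipulation of multiplicative unitaries, working at the level of the universal objects and then pushing down to the representations. For part (1), I would start from the definition
\[
U_{\cH}=(\id\otimes\pi_{\cH}^{(2)})(W_{\uni})(\id\otimes\pi_{\cH}^{(1)})(W_{\uni})
\]
applied to the specific bimodule $\cH=L^2(G)\otimes\cH_U$. Here the left $LG$-action is $\phi_{W\circledT U}$ and the right action is $\hat{J}\lambda(\cdot)^*\hat{J}\otimes1$. So $\pi_{\cH}^{(1)}$ is the composite of $\phi_{W\circledT U}$ with $\lambda_{\uni}$, giving $(\id\otimes\pi_{\cH}^{(1)})(W_{\uni}) = W_{12}U_{13}$ in $M(C_0(G)\otimes\IB(L^2(G)\otimes\cH_U))$; and $\pi_{\cH}^{(2)}$ factors through $1\otimes\lambda^{\opp}$ acting on the first tensor leg only, so by the computation already displayed for $V_{\mathrm c}$ one gets $(\id\otimes\pi_{\cH}^{(2)})(W_{\uni}) = ((1\otimes K)W(1\otimes K)^*)_{12}$, the operator $K=\hat J J$ acting on the $L^2(G)$-leg. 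Multiplying, $U_{\cH} = ((1\otimes K)W(1\otimes K)^*)_{12}\,W_{12}U_{13} = (V_{\mathrm c})_{12}\,U_{13} = V_{\mathrm c}\circledT U$, which is exactly the claim. The ``in particular'' then follows because strong inner amenability says $1\preceq V_{\mathrm c}$, and weak containment is compatible with the tensor product $\circledT$ (continuity of $\phi$ under weak containment), so $U = 1\circledT U \preceq V_{\mathrm c}\circledT U = U_{\cH}$.

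For part (2), the target identification is essentially the pentagon/coassociativity relation $q\circ\hat\Delta_{\max}=\hat\Delta_{\uni}$ together with $(\id\otimes\hat\Delta)(W)=W_{12}W_{13}$ (in the appropriate legs), repackaged as a statement about bimodules. The $LG$-$LG$ bimodule $L^2(G)\otimes\cH$ associated with $U_{\cH}$ carries left action $\phi_{W\circledT U_{\cH}}$ on $L^2(G)\otimes\cH$ and right action $\hat J\lambda(\cdot)^*\hat J\otimes1$. On the other side, $\hat\Delta(LG)\subset LG\otimes LG=\IB(\cH\otimes L^2(G))$ acts diagonally via $\hat\Delta$ from the left, and from the right via $\hat J_2\hat\Delta(\cdot)^*\hat J_2$ where $\hat J_2 = \hat J\otimes\hat J$ (or the relevant modular conjugation of the tensor weight). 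I would write down an explicit unitary $\cH\otimes L^2(G)\to L^2(G)\otimes\cH$ — the natural candidate being built from the multiplicative unitary $W$ (or rather $\Sigma W^*\Sigma$) intertwining the two legs — and check that it carries one pair of actions to the other. The verification that it intertwines the left actions is precisely the relation defining $U_{\cH}$ expanded through $\hat\Delta_{\max}=\phi_X$ with $X=(\id\otimes\pi_2)(W_{\uni})(\id\otimes\pi_1)(W_{\uni})$; the right-action intertwining is a separate, shorter computation using $\hat R$ and $\hat\Delta\circ\hat R = \sigma\circ(\hat R\otimes\hat R)\circ\hat\Delta$.

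The main obstacle I anticipate is in part (2): correctly bookkeeping the modular conjugations and the opposite structure so that the right $LG$-action on $L^2(G)\otimes\cH$ (which involves $\hat R$ buried inside $\lambda^{\opp}$ in the definition of $U_{\cH}$, and then another $\hat J(\cdot)^*\hat J$ from forming the bimodule over $L^2(G)\otimes\cH$) matches the right action on $\cH\otimes L^2(G)$ coming from $\hat\Delta(LG)$. The composition $\hat R$-inside-$\hat R$-inside may or may not cancel, and getting the leg numbering and the order of the two factors of $W_{\uni}$ in $X$ right is where sign/ordering errors creep in. Concretely, I expect to need the identities $\hat R(\lambda(\omega)) = \lambda(\omega\circ R)$-type formulas and the behaviour of $\hat\Delta$ under $\hat R$, and to track that $K=\hat J J$ appearing in $V_{\mathrm c}$ reassembles correctly. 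Once the correct intertwining unitary is pinned down, the verification should reduce to a finite sequence of pentagon-type rewritings of products of legs of $W$ and $W_{\uni}$, with no further conceptual input.
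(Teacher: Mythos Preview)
Your treatment of part~(1) is correct and essentially identical to the paper's: both identify $(\id\otimes\pi_{\cH}^{(1)})(W_{\uni})=W_{12}U_{13}$ and $(\id\otimes\pi_{\cH}^{(2)})(W_{\uni})=((1\otimes K)W(1\otimes K)^*)_{12}$, multiply, and recognise $V_{\mathrm c}\circledT U$.

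For part~(2) there is a genuine gap in your choice of intertwiner. Your candidate ``built from $W$ (or $\Sigma W^*\Sigma$)'' is not well-defined here: $W$ lives on $L^2(G)\otimes L^2(G)$, while the intertwiner must act on $L^2(G)\otimes\cH$ for an \emph{arbitrary} $LG$-$LG$ bimodule $\cH$, and there is no canonical copy of $W$ available on those legs. The paper's unitary is $Y:=U_2\Sigma\colon \cH\otimes L^2(G)\to L^2(G)\otimes\cH$, where $U_2=(\id\otimes\pi_{\cH}^{(2)})(W_{\uni})\in\IB(L^2(G)\otimes\cH)$ and $\Sigma$ is the flip; the point is that the intertwiner must depend on the right $LG$-action on $\cH$, not just on the ambient quantum group. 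Once $Y$ is in hand the left-action check is short:
\[
Y^*\,U_{\cH}(a\otimes1)U_{\cH}^*\,Y
 =\Sigma^*U_2^*\,U_2U_1(a\otimes1)U_1^*U_2^*\,U_2\Sigma
 =\Sigma^*U_1(a\otimes1)U_1^*\Sigma
 =(\pi_1\otimes\id)(\hat\Delta(a)),
\]
using $\hat\Delta(a)=\Sigma W(a\otimes1)W^*\Sigma$ pushed through $\pi_{\cH}^{(1)}$. For the right action the paper does not invoke $\hat\Delta\circ\hat R=\sigma\circ(\hat R\otimes\hat R)\circ\hat\Delta$ as you suggest; instead it uses the single identity $(\hat J\otimes J)W(\hat J\otimes J)=W^*$, applied after rewriting $\pi_2(x^{\opp})=\pi_{\cH}^{(2)}(Jx^*J)$, to obtain
\[
Y^*(\hat J x^*\hat J\otimes1)Y
 =(\pi_{\cH}^{(2)}\otimes\id)\bigl((J\otimes\hat J)\hat\Delta(x)^*(J\otimes\hat J)\bigr)
 =(\pi_2\otimes\rho)(\hat\Delta(x)^{\opp}).
\]
So your anticipated obstacle is real, but its resolution lies in choosing $Y=U_2\Sigma$ rather than in antipode identities.
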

\begin{proof}
\textbf{Ad(1):} A routine computation shows 
\begin{alignat*}{2}
U_{\cH} &= (\id\otimes\pi_{\cH}^{(2)})(W_{\uni})(\id\otimes\pi_{\cH}^{(1)})(W_{\uni}) 
 &&= ((1\otimes K)W(1\otimes K)^*)_{12} \cdot (W\circledT U)\\
 &=  ((1\otimes K)W(1\otimes K)^* W)_{12}U_{13}
 &&= V_{\mathrm{c}}\circledT U.
\end{alignat*}
This proves the first assertion.
If $1\preceq V_{\mathrm{c}}$, then $U=1\circledT U \preceq V_{\mathrm{c}}\circledT U$. 

\textbf{Ad(2):} To ease the notation, write $U_i=(\id\otimes\pi_{\cH}^{(i)})(W_{\uni})$ 
and $\pi_\cH(a\otimes x^{\opp})=\pi_1(a)\pi_2(x^{\opp})$, and 
denote by $\Sigma$ the flip 
either on $L^2(G)\otimes L^2(G)$ or on $\cH\otimes L^2(G)$.
Note that $U_{\cH}=U_2U_1$ and that 
$\hat{\Delta}(a)=\Sigma W (a\otimes1)W^*\Sigma$ for $a\in LG$ (\cite[Theorem 5.17]{kustermans}).
Thus for the unitary operator  
$Y := U_2\Sigma$ from $\cH \otimes L^2(G)$ onto $L^2(G)\otimes\cH$, one has 
\begin{align*}
Y^* \pi_{L^2(G)\otimes\cH}(a\otimes1) Y 
 = \Sigma^* U_2^*U_{\cH}(a\otimes1)U_{\cH}^*U_2\Sigma 
 = \Sigma^* U_1(a\otimes1)U_1^*\Sigma 
 = (\pi_1\otimes\id)(\hat{\Delta}(a)).
\end{align*}
We abuse the notation and view $\pi_{\cH}^{(2)}$ as a $*$-homo\-mor\-phism from 
$LG$ into $\IB(\cH)$, which is given by $\pi_{\cH}^{(2)}(x)=\pi_2(\hat{R}(x)^{\opp})$.
The right action of $LG$ on $L^2(G)$ is denoted by $\rho(y^{\opp})=\hat{J}y^*\hat{J}$. 
Note that
\[
(\pi_2\otimes\rho)( x^{\opp}\otimes y^{\opp} ) = \pi_{\cH}^{(2)}(Jx^*J) \otimes \hat{J}y^*\hat{J} 
 = (\pi_{\cH}^{(2)}\otimes\id)( (J\otimes\hat{J}) (x\otimes y)^* (J\otimes\hat{J}) ).
\]
Since $(\hat{J}\otimes J) W (\hat{J}\otimes J) = W^*$ (\cite[Section 5.3]{kustermans}), it follows that 
\begin{align*}
Y^*\pi_{L^2(G)\otimes\cH}(1\otimes x^{\opp}) Y 
 &= \Sigma^*U_2^*(\hat{J}x^*\hat{J}\otimes 1)U_2\Sigma 
 = (\pi_{\cH}^{(2)}\otimes\id)(\Sigma W^*(\hat{J}x^*\hat{J}\otimes JJ)W\Sigma) \\
 &= (\pi_{\cH}^{(2)}\otimes\id)((J\otimes\hat{J})\hat{\Delta}(x^*)(J\otimes\hat{J}))
 = (\pi_2\otimes\rho)(\hat{\Delta}(x)^{\opp}).
\end{align*}
This proves the assertion.
\end{proof}

Recall from \cite{dfsw} that a unitary representation $U$ is said to 
be \emph{mixing} if the coefficient $f_\omega=(\id\otimes\omega)(U)$ belongs 
to $C_0(G)$ for every $\omega\in\IB(\cH_U)_*$. 

\begin{prop}\label{prop:mixingreptobimod}
If $U$ is a mixing unitary representation of a locally compact quantum group $G$,  
then the $LG$-$LG$ bimodule $L^2(G)\otimes\cH_U$ is strictly mixing. 
Conversely, if $\cH$ is a strictly mixing $LG$-$LG$ bimodule, 
then the unitary representation $U_{\cH}$ is mixing. 
\end{prop}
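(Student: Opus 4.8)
The plan is to translate between the "coefficient in $C_0(G)$" condition on a unitary representation and the "$T_{\hat\varphi}$ is compact" condition defining strict mixing, using Lemma~\ref{lem:unitaryambi} as the dictionary.

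For the forward direction, suppose $U$ is mixing. By Theorem~\ref{thm:ambibounded} the $(\hat\varphi,\hat\varphi)$-bounded vectors $\xi$ in $L^2(G)$ are dense, and simple tensors $\xi\otimes\eta$ with such $\xi$ and arbitrary $\eta\in\cH_U$ span a dense $LG$-$LG$ cyclic subset of $L^2(G)\otimes\cH_U$. By Lemma~\ref{lem:unitaryambi}, each such $\zeta=\xi\otimes\eta$ is $(\hat\varphi,\hat\varphi)$-bounded with $T_{\hat\varphi}(\zeta)=T_{\hat\varphi}(\xi)f_\eta$. So it suffices to observe that $T_{\hat\varphi}(\xi)f_\eta$ is compact. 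Now $f_\eta=f_{\omega_\eta}\in C_0(G)$ by the mixing hypothesis, and left multiplication by an element of $C_0(G)$ on $L^2(G)$ is a limit of "finite rank relative to $LG$" operators; more to the point, $T_{\hat\varphi}(\xi)=L_{\hat\varphi}(\xi)^*R_{\hat\varphi}(\xi)$ is a fixed bounded operator, so I need that $a\mapsto T_{\hat\varphi}(\xi)a$ sends $C_0(G)$ into the compacts $\IK(L^2(G,\hat\varphi),\;)$. The cleanest route is: the operator $L^\infty(G)\ni a\mapsto T_{\hat\varphi}(\xi)a\in\IB(L^2(\hat\varphi))$ is normal (in the appropriate sense) and $T_{\hat\varphi}(\xi)=L_{\hat\varphi}(\xi)^*R_{\hat\varphi}(\xi)$ already has a "one-sided" structure; combined with the fact that $a\in C_0(G)$ acting on $L^2(G)$ lies in $\IK(L^2(G))$ up to the commutant, one deduces compactness of $T_{\hat\varphi}(\xi)f_\eta$. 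This is exactly the quantum-group analogue of the classical computation in \cite{bannon-fang}, and I expect it to be the main technical point — making precise why multiplying the fixed operator $T_{\hat\varphi}(\xi)$ by a $C_0(G)$-coefficient lands in the compacts, which uses that $C_0(G)$ acts on $L^2(G)$ compactly modulo $LG$ together with $\xi$ being bounded. Granting this, $(L^2(G)\otimes\cH_U)_{\mathrm{mix}}$ contains the cyclic family of such $\zeta$, so the bimodule is strictly mixing.

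For the converse, suppose $\cH$ is a strictly mixing $LG$-$LG$ bimodule, and recall $\phi_{U_\cH}=\pi_\cH\circ(\lambda\otimes\lambda^{\opp})\circ\hat\Delta_{\max}$, equivalently $U_\cH=(\id\otimes\pi_\cH^{(2)})(W_{\uni})(\id\otimes\pi_\cH^{(1)})(W_{\uni})$. I must show every coefficient $f_\omega=(\id\otimes\omega)(U_\cH)\in C_0(G)$ for $\omega\in\IB(\cH)_*$. It suffices to check this on a dense set of $\omega$, namely vector functionals $\omega_{\zeta_1,\zeta_2}$ with $\zeta_1,\zeta_2$ ranging over $M\cH_{\mathrm{mix}}N$, or rather over $\cH_{\mathrm{mix}}$ after absorbing the $M,N$ actions via the formula for $U_\cH$. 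Unwinding the definition of $U_\cH$ and pairing against $\lambda(\omega')\in\mathrm{C}^*_\lambda(G)$, one gets an expression of the shape $\langle a\zeta x,\zeta\rangle$-type matrix coefficients of the left and right $LG$-actions on $\cH$, and by Lemma~\ref{lem:l2op} these are of the form $\langle T_{\hat\varphi}(\zeta)\,a\hat\varphi^{1/2},\hat\varphi^{1/2}x^*\rangle$ when $\zeta$ is $(\hat\varphi,\hat\varphi)$-bounded; since $\zeta\in\cH_{\mathrm{mix}}$ the operator $T_{\hat\varphi}(\zeta)$ is compact, and this compactness is precisely what forces the resulting coefficient of $U_\cH$ to vanish at infinity, i.e.\ to lie in $C_0(G)=\overline{\lambda(\cI)}\cap L^\infty(G)$ read through the duality $\langle\lambda(\omega')\hat\varphi^{1/2},\cdot\rangle$. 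The bookkeeping — carefully rewriting $(\id\otimes\omega_{\zeta_1,\zeta_2})(U_\cH)$ in terms of $T_{\hat\varphi}$ of bounded vectors and checking it really is the $C_0(G)$-norm limit of things built from compact operators — is the delicate part here; but it is parallel to the forward direction and to the computations already carried out in the proofs of Lemma~\ref{lem:unitaryambi} and Proposition~\ref{prop:weakcontainment}, so I would model it on those. Finally, since such $\omega_{\zeta_1,\zeta_2}$ with $\zeta_i$ bounded (hence, up to $LG$-translates, in $\cH_{\mathrm{mix}}$) are norm-dense in $\IB(\cH)_*$ and $C_0(G)$ is norm-closed, every coefficient of $U_\cH$ lies in $C_0(G)$, so $U_\cH$ is mixing.
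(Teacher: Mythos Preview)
Your overall architecture matches the paper's, but the forward direction has a genuine gap at exactly the point you flag as ``the main technical point.'' For an arbitrary $(\hat\varphi,\hat\varphi)$-bounded $\xi\in L^2(G)$ one only knows $L_{\hat\varphi}(\xi)\in LG$ and $R_{\hat\varphi}(\xi)\in \hat{J}LG\hat{J}$, so $T_{\hat\varphi}(\xi)f_\eta$ lies in $LG\cdot\hat{J}LG\hat{J}\cdot C_0(G)$, and there is no reason for that product to be compact: already $1\cdot 1\cdot f$ with $f\in C_0(G)$ is a multiplication operator, which is not compact unless $G$ is discrete. Your heuristic ``$C_0(G)$ acts compactly modulo $LG$'' is not a theorem and cannot be made into one at this level of generality. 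The paper fixes this by \emph{choosing} $\xi=\lambda(\omega)\hat\varphi^{1/2}$ for $\omega\in\cI$ analytic for $t\mapsto\omega(\delta^{-it}\tau_{-t}(\cdot))$; then $L_{\hat\varphi}(\xi)=\lambda(\omega)\in\mathrm{C}^*_\lambda(G)$ and $R_{\hat\varphi}(\xi)=\hat{J}\lambda(\rho_{i/2}(\omega))^*\hat{J}\in\hat{J}\mathrm{C}^*_\lambda(G)\hat{J}$, so $T_{\hat\varphi}(\zeta)\in \mathrm{C}^*_\lambda(G)\cdot\hat{J}\mathrm{C}^*_\lambda(G)\hat{J}\cdot C_0(G)$, and \emph{this} triple product is contained in $\IK(L^2(G))$ by a nontrivial result of Baaj--Skandalis--Vaes \cite[Lemma 5.5]{bsv}. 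Both the specific choice of $\xi$ (to land in the reduced $\mathrm{C}^*$-algebras rather than the von Neumann algebras) and the external input from \cite{bsv} are essential and missing from your sketch.

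For the converse your outline is morally right but underspecifies the computation. The paper shows, for $\eta\in\cH_{\mathrm{mix}}$ and $a,x\in\fn_{\hat\varphi}$, the explicit identity
\[
(\id\otimes\omega_{a\eta,\eta x^*})(U_{\cH})
=(\id\otimes\omega_{\hat\varphi^{1/2}x^*,\,a\hat\varphi^{1/2}})\bigl((1\otimes K)W(1\otimes K^*T_{\hat\varphi}(\eta))W\bigr),
\]
obtained by separately rewriting $(\id\otimes\pi_\cH^{(1)})(W_{\uni})(\xi\otimes a\eta)$ and $(\id\otimes\pi_\cH^{(2)})(W_{\uni}^*)(\xi\otimes\eta x^*)$ in terms of $R_{\hat\varphi}(\eta)$ and $L_{\hat\varphi}(\eta)$. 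Compactness of $T_{\hat\varphi}(\eta)$ then lets one approximate by finite-rank operators, reducing to finite sums of products of slices $(\id\otimes\omega)(W)\in C_0(G)$. Your phrase ``compactness forces the coefficient to vanish at infinity'' is the right intuition, but you should be aware that it is this concrete formula that makes it work, not an abstract duality argument.
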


\begin{proof}
Let $\omega \in\cI$ (see Proof of  Lemma~\ref{lem:unitaryambi}) 
be an element which 
is analytic with respect to $t\mapsto \rho_t(\omega) := \omega(\delta^{-it}\tau_{-t}(\,\cdot\,))$ 
(see \cite[5.22]{kustermans} or \cite[8.7]{kv} for the notation).
Then, the vector $\xi:=\lambda(\omega)\hat{\varphi}^{1/2}$ is bounded with 
$L_{\hat{\varphi}}(\xi)=\lambda(\omega) \in \mathrm{C}^*_\lambda(G)$ and 
$R_{\hat{\varphi}}(\xi)=\hat{J}\lambda(\rho_{i/2}(\omega))^*\hat{J} \in 
\hat{J}\mathrm{C}^*_\lambda(G)\hat{J}$. 
Hence, by Lemma~\ref{lem:unitaryambi}, the vector 
$\zeta := \xi \otimes \eta$ is bounded for every $\eta\in\cH_U$ 
and satisfies 
\[
T_{\hat{\varphi}}(\zeta) = L_{\hat{\varphi}}(\xi)^*R_{\hat{\varphi}}(\xi) f_\eta 
 \in \mathrm{C}^*_\lambda(G)\cdot \hat{J}\mathrm{C}^*_\lambda(G)\hat{J}\cdot C_0(G) 
 \subset \IK(L^2(G)),
\]
where the last inclusion is by \cite[Lemma 5.5]{bsv}.
Since such $\zeta$'s have a dense span, 
this proves that  $L^2(G)\otimes\cH_U$ is strictly mixing. 

For the converse, it suffices to show $(\id\otimes\omega_{a\eta ,\eta x^*})(U_{\cH}) \in C_0(G)$ 
for every $\eta\in\cH_{\mathrm{mix}}$ and $a,x\in n_\varphi$.
Let us fix $\xi\in L^2(G)$ and we will compute 
\[
\ip{(\id\otimes\omega_{a\eta, \eta x^*})(U_{\cH})\xi,\xi}
 = \ip{(\id\otimes\pi_{\cH}^{(1)})(W_{\uni})(\xi\otimes a\eta),
  (\id\otimes\pi_{\cH}^{(2)})(W_{\uni}^*)(\xi\otimes \eta x^*)}.
\]
Let $\zeta \in L^2(G)$ be given and consider
$L_\zeta\colon \cH\ni \eta'\mapsto \zeta\otimes\eta' \in L^2(G)\otimes \cH$.
Then, 
\begin{align*}
L_\zeta^*(\id\otimes\pi_{\cH}^{(2)})(W_{\uni}^*)(\xi\otimes \eta x^*)
 &= \pi_{\cH}^{(2)}((\omega_{\xi,\zeta}\otimes\id)(W_{\uni}^*))\eta x^* 
 = \eta x^*\hat{R}((\omega_{\xi,\zeta}\otimes\id)(W^*)) \\
 &= L_{\hat{\varphi}}(\eta) \hat{J}\hat{R}((\omega_{\xi,\zeta}\otimes\id)(W^*))^*\hat{J} \hat{\varphi}^{1/2}x^* \\
 &= L_{\hat{\varphi}}(\eta)  K (\omega_{\xi,\zeta}\otimes\id)(W^*) K^*\hat{\varphi}^{1/2}x^* \\
 &= L_\zeta^* (1\otimes L_{\hat{\varphi}}(\eta) K) W^*(1\otimes K)^* (\xi\otimes\hat{\varphi}^{1/2}x^*).
\end{align*}
Since $\zeta\in L^2(G)$ was arbitrary, it follows that 
\begin{align*}
(\id\otimes\pi_{\cH}^{(2)})(W_{\uni}^*)(\xi\otimes \eta x^*)
 &= (1\otimes L_{\hat{\varphi}}(\eta) K) W^*(1\otimes K)^* (\xi\otimes\hat{\varphi}^{1/2}x^*).
\intertext{A similar but much easier computation shows }
(\id\otimes\pi_{\cH}^{(1)})(W_{\uni})(\xi\otimes a\eta) 
 &= (1\otimes R_{\hat{\varphi}}(\eta)) W(\xi\otimes a \hat{\varphi}^{1/2}).
\end{align*}
Therefore, by Lemma~\ref{lem:l2op}, 
\[
\ip{(\id\otimes\omega_{a \eta, \eta x^*})(U_{\cH})\xi,\xi} 
 =  \ip{(1\otimes K) W (1\otimes K^* T_{\hat{\varphi}}(\eta)) W (\xi\otimes a\hat{\varphi}^{1/2}), (\xi\otimes \hat{\varphi}^{1/2}x^*)}.
\]
Since $\xi\in L^2(G)$ was arbitrary, this implies 
\[
(\id\otimes\omega_{a\eta, \eta x^*})(U_{\cH})
 = (\id\otimes\omega_{\hat{\varphi}^{1/2}x^*,a \hat{\varphi}^{1/2}})
  ((1\otimes K) W (1\otimes K^* T_{\hat{\varphi}}(\eta)) W).
\]
Since $T_{\hat{\varphi}}(\eta)$ is compact, we conclude that 
$(\id\otimes\omega_{a\eta, \eta x^*})(U_{\cH})\in 
\mathrm{C}^*\{(\id\otimes\omega)(W) : \omega \} = C_0(G)$. 
This finishes the proof of the converse. 
\end{proof}

Recall from \cite{dfsw} that a locally compact quantum group is said to have the 
\emph{Haagerup property} if the trivial unitary representation $1$ is weakly contained 
in a mixing unitary representation. 
The following theorem extends the same result for discrete quantum groups 
in \cite[Theorems 7.4 and 7.7]{dfsw}.

\begin{thm}
Let $G$ be a locally compact quantum group. 
If $G$ is has the Haagerup property, then $LG$ has the HAP. 
Conversely, if $G$ is strongly inner amenable and $LG$ has the HAP, then 
$G$ has the Haagerup property.
\end{thm}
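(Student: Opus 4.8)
The plan is to combine the bimodule characterization of the HAP from Theorem~\ref{thm:bihap} with the dictionary between unitary representations of $G$ and $LG$-$LG$ bimodules developed in Propositions~\ref{prop:weakcontainment}, \ref{prop:weakcontainment2}, \ref{prop:reciprocity}, and \ref{prop:mixingreptobimod}. For the forward direction, suppose $G$ has the Haagerup property, so that $1\preceq U$ for some mixing unitary representation $U$. Applying the functor $U\mapsto {}_{LG}(L^2(G)\otimes\cH_U)_{LG}$ and using Proposition~\ref{prop:weakcontainment}, we get ${}_{LG}L^2(G)_{LG}={}_{LG}(L^2(G)\otimes\cH_1)_{LG}\preceq{}_{LG}(L^2(G)\otimes\cH_U)_{LG}$; and by Proposition~\ref{prop:mixingreptobimod} the latter bimodule is strictly mixing. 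By Theorem~\ref{thm:bihap}, the identity bimodule being weakly contained in a strictly mixing $LG$-$LG$ bimodule is exactly condition (\ref{con:b2}), hence $LG$ has the HAP.

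For the converse, suppose $G$ is strongly inner amenable and $LG$ has the HAP. By Theorem~\ref{thm:bihap}(\ref{con:b2}), there is a strictly mixing $LG$-$LG$ bimodule $\cH$ with ${}_{LG}L^2(G)_{LG}\preceq\cH$. Pass to $U_{\cH}$, the associated unitary representation. By Proposition~\ref{prop:weakcontainment2}, the weak containment $L^2(G)\preceq\cH$ yields $V_{\mathrm{c}}=U_{L^2(G)}\preceq U_{\cH}$. On the other hand, by Proposition~\ref{prop:mixingreptobimod} the representation $U_{\cH}$ is mixing, since $\cH$ is strictly mixing. Finally, strong inner amenability means precisely $1\preceq V_{\mathrm{c}}$, so by transitivity of weak containment $1\preceq V_{\mathrm{c}}\preceq U_{\cH}$ with $U_{\cH}$ mixing; this is exactly the Haagerup property for $G$.

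The main subtlety is bookkeeping the two directions through the right functors: the forward direction uses $U\mapsto{}_{LG}(L^2(G)\otimes\cH_U)_{LG}$ together with Proposition~\ref{prop:mixingreptobimod} (first half), while the converse uses $\cH\mapsto U_{\cH}$ together with Proposition~\ref{prop:mixingreptobimod} (second half); one must be careful that strong inner amenability is invoked only where needed, namely to reintroduce the trivial representation in the converse (this is where Proposition~\ref{prop:reciprocity}(1) clarifies why $U_{\cH}$ genuinely ``sees'' $V_{\mathrm{c}}$). Everything else is a direct chain of weak containments, all of which are preserved by the relevant constructions (Propositions~\ref{prop:weakcontainment} and \ref{prop:weakcontainment2}), so no further analytic work is required beyond citing the already-established propositions.
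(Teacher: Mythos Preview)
Your proof is correct and follows essentially the same approach as the paper's own proof: both directions use Theorem~\ref{thm:bihap} as the bridge between HAP and bimodules, invoking Proposition~\ref{prop:weakcontainment} and the first half of Proposition~\ref{prop:mixingreptobimod} for the forward implication, and Proposition~\ref{prop:weakcontainment2} together with the second half of Proposition~\ref{prop:mixingreptobimod} plus the definition of strong inner amenability for the converse. Your reference to Proposition~\ref{prop:reciprocity}(1) is not strictly necessary (the identity $V_{\mathrm{c}}=U_{L^2(G)}$ is definitional), but it does no harm.
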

\begin{proof}
First suppose that $G$ has the Haagerup property, i.e., there is a mixing unitary representation 
$U$ such that $1\preceq U$. Then, ${}_{LG}L^2(G)_{LG}\preceq {}_{LG}(L^2(G)\otimes\cH_U)_{LG}$ 
by Proposition~\ref{prop:weakcontainment}, but the latter bimodule is 
strictly mixing by Proposition~\ref{prop:mixingreptobimod}.
Now, Theorem~\ref{thm:bihap} applies and we conclude that $LG$ has the HAP. 
Conversely, suppose that $LG$ has the HAP. Then by Theorem~\ref{thm:bihap} 
there is a strictly mixing bimodule $\cH$ such that ${}_{LG}L^2(G)_{LG}\preceq {}_{LG}\cH_{LG}$.
By Proposition~\ref{prop:mixingreptobimod}, the associated unitary representation 
$U_{\cH}$ is mixing and, by Proposition~\ref{prop:weakcontainment2}, 
it satisfies $V_{\mathrm{c}}\preceq U_{\cH}$.  
Hence, if $G$ is moreover strongly inner amenable, then the mixing representation $U_{\cH}$ 
weakly contains the trivial representation.
\end{proof}

While it may be true that the HAP (resp.\ amenability) of $LG$ implies 
the Haagerup property (resp.\ amenability) of $G$ for discrete quantum groups, 
this need not be true for general locally compact quantum groups. 
For example, a simple connected higher rank Lie group, such as $\mathrm{SL}(3,\IR)$, 
has a type $\mathrm{I}$ and hence amenable group von Neumann algebra, 
but it does not have the Haagerup 
property because of Kazhdan's property (T) (see \cite{ccjjv}).

\end{document}